\documentclass[11pt]{amsart}

\usepackage{latexsym,rawfonts}
\usepackage{amsfonts,amssymb}
\usepackage{amsmath,amsthm}

\usepackage[plainpages=false]{hyperref}
\usepackage{graphicx}
\usepackage{color}

\textwidth = 6.0 in
 \textheight = 8.5in
 \oddsidemargin = 0.30 in
\evensidemargin = 0.30 in
 \voffset=-30pt

\pagestyle{plain}

\newtheorem{theorem}{Theorem}[section]
\newtheorem{corollary}{Corollary}[section]
 
\newtheorem{lemma}{Lemma}[section]

\theoremstyle{definition} 
\newtheorem{definition}{Definition}[section]

\numberwithin{equation}{section}


\DeclareMathOperator{\VOL}{vol}
\DeclareMathOperator{\AREA}{area}

\DeclareMathOperator{\DIAG}{diag}

\newcommand{\R}{\mathbb{R}}
\newcommand{\uS}{\mathbb{S}^{n-1}}

\newcommand{\dd}{\mathop{}\!\mathrm{d}}

\newcommand{\set}[1]{\left\{#1\right\}}
\newcommand{\norm}[1]{\left\Vert#1\right\Vert}

\newcommand{\pd}{\partial}
\newcommand{\delbar}{\overline{\nabla}}

\newcommand{\A}{\mathcal{A}}


\newcommand{\MA}{Monge-Amp\`ere }


\parskip = 0.25in

\begin{document}

\title{Non-uniqueness of solutions to the dual $L_p$-Minkowski problem}

\author{ Qi-Rui Li \quad Jiakun Liu \quad Jian Lu }

\address{Qi-Rui Li:
  School of Mathematical Sciences,
  Zhejiang University, Hangzhou 310027, China}
\email{qi-rui.li@zju.edu.cn}

\address{Jiakun Liu:
  School of Mathematics and Applied Statistics,
  University of Wollongong, Wollongong NSW 2522, Australia}
\email{jiakunl@uow.edu.au}

\address{Jian Lu:
  South China Research Center for Applied Mathematics and Interdisciplinary Studies, 
  South China Normal University, Guangzhou 510631, China; and
  \newline
  School of Mathematics and Applied Statistics,
  University of Wollongong, Wollongong NSW 2522, Australia}
\email{lj-tshu04@163.com}

\thanks{Research of Liu was supported by the Australian Research Council DP170100929; Research of Lu was supported by ARC DP170100929 and Natural Science Foundation of China (11871432)}

\date{}

\begin{abstract}
The dual $L_p$-Minkowski problem with $p<0<q$ is investigated in this
paper.
By proving a new existence result of solutions and constructing an example, we
obtain the non-uniqueness of solutions to this problem.
\end{abstract}

\keywords{\MA equation,
  $L_p$-Minkowski problem,
  non-uniqueness, existence}

\subjclass[2010]{52A39, 35J20, 35J96}

\maketitle

\baselineskip=16.4pt
\parskip=3pt


\section{Introduction}

In this paper, we consider the following \MA type equation:
\begin{equation} \label{pqMP}
  H^{1-p} |\delbar H|^{q-n} \det(\nabla^2H+HI) = f \quad \text{ on } \ \uS,
\end{equation}
where $H$ is the support function of a convex body $K=K_H$ in the Euclidean
space $\R^n$, $\nabla$ is the covariant derivative with respect to an
orthonormal frame on the unit sphere $\uS$, $I$ is the unit matrix of order $n-1$, $\delbar
H(x) = \nabla H(x) +H(x) x$ is the point on $\pd K$ whose unit outer normal
vector is $x\in\uS$, the indices $p\in\R$, $q\in\R$, and $f$ is a given positive
function on $\uS$.

Equation \eqref{pqMP} arises from the \emph{dual $L_p$-Minkowski problem}, when the given
measure is absolutely continuous.
The dual $L_p$-Minkowski problem was recently introduced by Lutwak, Yang and Zhang 
\cite{LYZ.Adv.329-2018.85},
which unifies the $L_p$-Minkowski problem and the dual Minkowski problem.
The $L_p$-Minkowski problem was introduced by Lutwak \cite{Lut.JDG.38-1993.131} in 1993 and has
been extensively studied over the last two decades
\cite{
  BLYZ.JAMS.26-2013.831, 
  CW.Adv.205-2006.33,
  HLYZ.DCG.33-2005.699,
  JL.Adv.344-2019.262,
  JLW.JFA.274-2018.826, 
  JLZ.CVPDE.55-2016.41,
  Lu.SCM.61-2018.511, 
  Lu.JDE.266-2019.4394, 
  LJ.DCDS.36-2016.971,
  LW.JDE.254-2013.983, 
  LYZ.JDG.56-2000.111,
  LYZ.TAMS.356-2004.4359,
  Sta.Adv.167-2002.160,
  Zhu.JDG.101-2015.159}, see in particular, Schneider's book \cite{Schneider.2014} and the references
therein.
The dual Minkowski problem was first proposed in the recent groundbreaking work
\cite{HLYZ.Acta.216-2016.325}
and then followed by
\cite{
  BHP.JDG.109-2018.411,
  CL.Adv.333-2018.87,
  HP.Adv.323-2018.114,
  HJ.JFA, 
  JW.JDE.263-2017.3230,
  LSW.JEMS,
  Zha.CVPDE.56-2017.18,
  Zha.JDG.110-2018.543}.
For the dual $L_p$-Minkowski problem itself, various progress has been made after the
paper \cite{LYZ.Adv.329-2018.85} such as
\cite{
  BF.JDE.266-2019.7980,
  CHZ.MA.373-2019.953,
  CCL,
  CL,
  HZ.Adv.332-2018.57}.
However, there are still many unsolved problems in this emerging research area. 
In particular, very little is known about the uniqueness of solutions in the case when $q>p$, which may relate to different types of geometric inequalities \cite{LYZ.Adv.329-2018.85} and have some other applications. 

The aim of this paper is to establish some uniqueness and non-uniqueness results for the dual $L_p$-Minkowski problem, that is for solutions to the equation \eqref{pqMP}.
When $q=n$, equation \eqref{pqMP} is reduced to the $L_p$-Minkowski problem, of
which the uniqueness results can be found in e.g.
\cite{
  And.Invent.138-1999.151,
  BCD.Acta.219-2017.1,
  CHLL,
  CW.Adv.205-2006.33,
  DG.PJASAMS.70-1994.252,
  HLX.Adv.281-2015.906,
  Lut.JDG.38-1993.131,
  Sta.Adv.180-2003.290}
and the non-uniqueness results can be found in e.g.
\cite{
  And.JAMS.16-2003.443,
  HLW.CVPDE.55-2016.117,
  JLW.Adv.281-2015.845,
  LiQR, 
  Yag.CVPDE.26-2006.49}.
For the general case of equation \eqref{pqMP},
it is already known that
the solution is unique when $q<p$
\cite{HZ.Adv.332-2018.57}
and is unique up to a constant when $q=p\neq0$
\cite{CL},
which can be obtained by the maximum principle.
The remaining case when $q>p$ is more complicated.
When $f\equiv1$, it was proved in \cite{CHZ.MA.373-2019.953} that
the even solution must be constant when $-n\leq p<q\leq\min\{n,p+n\}$,
and the evenness assumption was dropped in \cite{CL} when $1<p<q\leq n$.
On the other hand, for $f\equiv1$, there exists at least one non-constant
even solution when $n=2$, $q\geq6$, $p=0$ \cite{HJ.JFA},
which was later extended to either when $qp\geq0$, $q>p+2n$, or when $q>0>p$, $1+\frac{n}{p}<\frac{1}{p}+\frac{1}{q}<\frac{n}{q}-1$ and $q>p+2n$
\cite{CCL}.

In this paper, we consider the general function $f$ and obtain the following non-uniqueness of solutions to
equation \eqref{pqMP} for $p<0<q$.
\begin{theorem}\label{thm1}
  \textup{(1)} For any $0<q\leq1$ and $p<q-1$, there exists a positive
  function $f\in C^\infty(\uS)$ such that equation \eqref{pqMP} admits two
  different solutions.

  \textup{(2)} For any $q>1$ and $p<0$, there exists an almost
  everywhere positive function $f\in C^\infty(\uS)$ such that equation
  \eqref{pqMP} admits two different solutions. 
  If, in addition, $1+\frac{n}{p}<\frac{1}{p}+\frac{1}{q}<\frac{n}{q}-1$ holds, the function $f$ can be chosen to be
  positive on $\uS$. 
\end{theorem}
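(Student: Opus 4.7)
The plan is to combine a new variational existence result for \eqref{pqMP} with an explicit construction, arranged so that the variational solution is forced to differ from the explicit one.

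First I would set up the constrained minimization
$$ J_f(K) = \frac{1}{p}\int_{\uS} H_K^p\, f\dd x \qquad \text{subject to} \qquad \tilde V_q(K)=1, $$
where $\tilde V_q(K)=\frac{1}{n}\int_{\uS} H\,|\delbar H|^{q-n}\det(\nabla^2 H+HI)\dd x$ is the dual $q$-volume and $K$ ranges over convex bodies containing the origin. A minimizer $K^{*}$ satisfies the Euler--Lagrange equation $H_{K^{*}}^{p-1}f = \lambda\,|\delbar H_{K^{*}}|^{q-n}\det(\nabla^2 H_{K^{*}}+H_{K^{*}}I)$ for a positive multiplier $\lambda$, and the rescaling $K^{*}\mapsto\lambda^{-1/(q-p)}K^{*}$ (permitted because $q-p>0$) turns this into a classical solution of \eqref{pqMP}. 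The technical content of this step is coercivity of $J_f$ along degenerating sequences together with $C^{0},C^{1},C^{2}$ a priori estimates and a standard Monge--Amp\`ere regularity bootstrap.

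Next I would construct an explicit solution. Pick a convex body $K_0$ with a simple support function (for concreteness a ball of radius $r$ translated off the origin, or a rotationally symmetric body elongated along a chosen axis) and compute
$$ f_0(x) := H_{K_0}(x)^{1-p}\,|\delbar H_{K_0}(x)|^{q-n}\det(\nabla^2 H_{K_0}+H_{K_0} I)(x). $$
By construction $K_0$ is a classical solution of \eqref{pqMP} with $f=f_0$; rescale so that $\tilde V_q(K_0)=1$. In part~(1) the parameter condition $p<q-1$ will make $f_0$ smooth and strictly positive on $\uS$; in part~(2) it is at worst allowed to vanish on a negligible set, and the extra hypothesis $1+\frac{n}{p}<\frac{1}{p}+\frac{1}{q}<\frac{n}{q}-1$ is what lets me choose $K_0$ so that $f_0>0$ everywhere.

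The heart of the argument is to certify that the minimizer $K^{*}$ produced for $f=f_0$ differs from $K_0$. It suffices to exhibit one competitor $\tilde K$ with $\tilde V_q(\tilde K)=1$ and $J_{f_0}(\tilde K)<J_{f_0}(K_0)$: the infimum then lies strictly below $J_{f_0}(K_0)$, so the minimizer cannot coincide with $K_0$, and both $K_0$ and $K^{*}$ solve \eqref{pqMP} for the same $f_0$. I would take $\tilde K$ to be a one-parameter deformation of $K_0$ (shifting its centre, or anisotropically dilating along the distinguished axis, composed with the rescaling that restores $\tilde V_q$), and verify by a direct expansion that the leading variation of $J_{f_0}$ along that family is strictly negative; the conditions $p<q-1$ in part~(1) and $q>1,\,p<0$ in part~(2) are precisely what make this leading variation strictly signed while keeping $f_0$ inside the required class. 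For part~(2) with merely a.e.\ positive $f$, I would first run the argument for $f_\varepsilon:=f+\varepsilon$, obtain distinct pairs $K_0^{\varepsilon}\neq K^{*,\varepsilon}$ of solutions, and pass to the limit $\varepsilon\to 0^{+}$ using the uniform estimates from the first step. The main obstacle is this last separation: since $q>p$ blocks the maximum-principle / comparison machinery that gave uniqueness in \cite{HZ.Adv.332-2018.57,CL} when $q\leq p$, the strict inequality $J_{f_0}(\tilde K)<J_{f_0}(K_0)$ has to be obtained by a quantitative computation matched carefully to the asymmetry of $K_0$ and to the exponents $p,q$, and this is where the hypotheses of parts~(1) and~(2) are used in an essential way.
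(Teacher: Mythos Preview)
Your high-level plan---construct one explicit solution, obtain another from a constrained variational problem, then show they differ---matches the paper's strategy. But the separation step, which you correctly identify as the crux, is not just unsubstantiated in your proposal: the mechanism you propose cannot work as written. You say you will deform $K_0$ and ``verify by a direct expansion that the leading variation of $J_{f_0}$ along that family is strictly negative.'' However, $K_0$ is by construction a solution of \eqref{pqMP}, hence a \emph{critical point} of $J_{f_0}$ on the constraint $\widetilde V_q=1$; the first variation of $J_{f_0}$ along \emph{every} admissible deformation of $K_0$ vanishes. To show $K_0$ is not the minimizer you would need a second-variation or genuinely large-deformation argument, and you give no indication how the hypotheses $p<q-1$ (resp.\ $q>1$, $p<0$) would force the Hessian of $J_{f_0}$ at $K_0$ to be indefinite. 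Your approximation $f\mapsto f+\varepsilon$ in part~(2) also breaks the construction, since $K_0$ no longer solves the perturbed equation.

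The paper sidesteps this completely. It never compares functional values. Instead it builds a \emph{degenerating family}: solve a classical Minkowski problem with right-hand side $|x'|^\alpha|x_n|^\beta|M_\epsilon x|^{-p-\delta-1-\beta}$ to obtain $h_\epsilon$, then set
\[
H_\epsilon(x)=\epsilon^{\frac{q+\delta-1}{q-p}}\,|M_\epsilon^{-1}x|\,h_\epsilon\!\left(\frac{M_\epsilon^{-1}x}{|M_\epsilon^{-1}x|}\right),\qquad M_\epsilon=\operatorname{diag}(\epsilon,\dots,\epsilon,1),
\]
which solves \eqref{pqMP} for an explicit $f_\epsilon$. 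The weights $|x'|^\alpha|x_n|^\beta$, with $\alpha,\beta$ chosen as nonnegative even integers satisfying the hypotheses of Theorem~\ref{thm2}, are precisely what makes $f_\epsilon$ only ``a.e.\ positive'' in part~(2); when $0<q\le 1$ or when the extra inequality in (2) holds, one may take $\alpha=\beta=0$ and $f_\epsilon$ is strictly positive. The two solutions are then separated by comparing \emph{dual volumes} rather than functional values: a direct computation gives $\widetilde V_q(K_{H_\epsilon})\le C\epsilon^{\gamma}\to 0$ (with $\gamma>0$ chosen via the auxiliary parameter $\delta$, and this is exactly where the conditions on $p,q$ in (1) and (2) are used), while the existence theorem~\ref{thm2} comes equipped with the quantitative lower bound $\widetilde V_q(K_{\widetilde H_\epsilon})\ge C\|f_\epsilon\|_{L^1}^{q/(q-p)}$, and a separate estimate shows $\|f_\epsilon\|_{L^1}\ge C>0$ uniformly in $\epsilon$. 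The missing idea in your proposal is this degenerating parameter $\epsilon$ together with the extra exponent $\delta$, which replaces the delicate second-variation analysis by a robust comparison of scalar invariants.
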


The proof of Theorem \ref{thm1} relies on a new existence result about equation \eqref{pqMP} in a class of symmetric convex bodies. 
In order to state this result, we first introduce some notations and terminologies. 
\begin{definition}
Denote $x=(x', x_n)\in\uS$, where $x'=(x_1, \cdots, x_{n-1})$.
A function $g:\uS\to\R$ is called \emph{rotationally symmetric} if it satisfies
\begin{equation*}
g(\Omega x',x_n)=g(x',x_n), \quad \forall \,x\in\uS \text{ and }\Omega\in \text{O}(n-1),
\end{equation*}
where $\text{O}(\cdot)$ denotes the orthogonal group.
A function $g:\uS\to\R$ is called \emph{even} if
\begin{equation*}
g(-x)=g(x), \quad \forall\,x\in\uS.
\end{equation*}
Correspondingly, when the support function of a convex body $K$ in $\R^n$ is
rotationally symmetric or even, we say $K$ is rotationally symmetric or even, respectively.
\end{definition}

\begin{definition}
When the origin is contained inside a convex body $K$, the radial function of $K$, denoted by
$\rho_K$, is defined as 
\begin{equation*}
\rho_K(u) :=\max\set{\lambda\geq0 : \lambda u\in K}, \quad u\in\uS.
\end{equation*}
Given a number $q\neq0$, the $q$-th dual volume of $K$ is defined by
\begin{equation}\label{qdvol}
    \widetilde{V}_q(K) = \frac{1}{n} \int_{\uS} \rho_{K}^q(u) \dd u.
\end{equation}
\end{definition}
We remark that the dual Minkowski problem is related to the first variation of the $q$-th dual volume \cite{HLYZ.Acta.216-2016.325}. See Lemma \ref{lem11} for its variational formula. 

\vskip5pt

We can now state our new existence result for equation \eqref{pqMP}, which will be used in the proof of Theorem \ref{thm1}. 
\begin{theorem}\label{thm2}
Assume that $p<0<q$, and $\alpha, \beta$ are two real numbers satisfying
\begin{align*}
  \alpha&>\max\set{1-n, 1-n+\tfrac{p(1-q)}{q}}, \\
  \beta&>\max\set{-1, -1+\tfrac{p(n-1-q)}{q}}.
\end{align*}
If $f$ is a non-negative, integrable,
rotationally symmetric, even function on $\uS$ and satisfies that
\begin{equation} \label{zero-cond}
  f(x)\leq C |x'|^\alpha \,|x_n|^\beta,  \qquad \forall \,x\in\uS
\end{equation}
for some positive constant $C$, and $\norm{f}_{L^1(\uS)}>0$,
then there exists a rotationally symmetric even solution to equation \eqref{pqMP}.
Moreover, the convex body $K$ determined by the solution satisfies the following estimates:
\begin{equation} \label{var-est}
  \widetilde{V}_q(K) \geq
  C_{n,p,q} \norm{f}_{L^1(\uS)}^{\frac{q}{q-p}},
\end{equation}
where $C_{n,p,q}$ is a positive constant depending only on $n$, $p$ and $q$.
\end{theorem}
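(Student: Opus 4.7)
The plan is to establish Theorem \ref{thm2} via a constrained variational argument on the class $\mathcal{K}$ of rotationally symmetric, origin-symmetric convex bodies $K\subset\R^n$. The natural problem is
\begin{equation*}
  \mathcal{V}_{p,q} := \sup\Bigl\{\, \widetilde V_q(K) \;:\; K\in\mathcal{K},\ -\tfrac{1}{p}\int_\uS H_K^p f\,dx=1 \,\Bigr\}.
\end{equation*}
Under the logarithmic variation $H_t=He^{t\phi}$ one computes $\tfrac{d}{dt}|_{0}\tfrac{1}{p}\int H_t^p f\,dx=\int H^p\phi f\,dx$, while Lemma \ref{lem11} yields $\tfrac{d}{dt}|_{0}\widetilde V_q(K_t)=\tfrac{q}{n}\int \phi\, H|\delbar H|^{q-n}\det(\nabla^2H+HI)\,dx$. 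The Lagrange condition at an interior maximizer thus reads $H^{1-p}|\delbar H|^{q-n}\det(\nabla^2H+HI)=cf$ for some $c>0$, and since $q-p>0$ the dilation $K\mapsto\mu K$ with $\mu=c^{-1/(q-p)}$ absorbs the constant and produces a genuine solution of equation \eqref{pqMP}.

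The central analytic task is to show that $\mathcal{V}_{p,q}$ is finite and is attained at a non-degenerate body. Since every $K\in\mathcal{K}$ is determined by its support function along a half-meridian, the potential degenerations along a maximizing sequence $\{K_j\}$ reduce essentially to two cases: the \emph{oblate} one, in which the polar semi-axis $a_j:=H_{K_j}(e_n)\to 0$ while the equatorial radius $b_j:=H_{K_j}(e_1)$ stays bounded; and the \emph{prolate} one with the roles reversed. In each case I would use convexity to derive a two-sided bound $H_{K_j}(x)\asymp a_j|x_n|+b_j|x'|$ on an appropriate portion of $\uS$, substitute into the constraint, and invoke the pointwise hypothesis $f(x)\le C|x'|^\alpha|x_n|^\beta$ to estimate $\int H_{K_j}^pf\,dx$. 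The two lower bounds $\alpha>1-n+p(1-q)/q$ and $\beta>-1+p(n-1-q)/q$ are precisely the thresholds at which the blow-up of this integral dominates the potential growth of $\widetilde V_q$ in the corresponding degeneration, ruling it out. Uniform two-sided bounds on $a_j,b_j$ then follow, Blaschke's selection theorem yields a subsequential limit $K_*\in\mathcal{K}$ with non-empty interior, and continuity of both functionals identifies $K_*$ as a maximizer.

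The lower bound \eqref{var-est} is obtained by comparison with a Euclidean ball. Testing at $B_r$ with $r^p\|f\|_{L^1}=-p$, i.e.\ $r=(\|f\|_{L^1}/|p|)^{1/|p|}$, gives $\mathcal{V}_{p,q}\ge\widetilde V_q(B_r)=\tfrac{|\uS|}{n}r^q\asymp \|f\|_{L^1}^{q/|p|}$. Evaluating the Lagrange constant $c$ at the maximizer by testing the Euler--Lagrange equation against $H_*$ itself yields $c=n\widetilde V_q(K_*)/|p|$ from the normalization, and pushing through the rescaling $\mu=c^{-1/(q-p)}$ produces $\widetilde V_q(K)=(|p|/n)^{q/(q-p)}\widetilde V_q(K_*)^{|p|/(q-p)}$, whence $\widetilde V_q(K)\ge C_{n,p,q}\|f\|_{L^1}^{q/(q-p)}$.

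The main obstacle will be the non-degeneracy step: extracting quantitatively sharp control of $\int H_{K_j}^p f\,dx$ as $K_j$ degenerates, matched exactly to the exponents $\alpha,\beta$ in \eqref{zero-cond}. The delicate feature is that the comparison $H_K\asymp a|x_n|+b|x'|$ is only accurate piecewise on $\uS$, so sharp estimates demand decomposing the sphere into polar, equatorial, and transitional annuli and carefully balancing the contributions, which is why the hypothesis thresholds depend on both $p/q$ and $n$ in the stated form.
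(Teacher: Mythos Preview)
Your proposal is correct and follows essentially the same variational strategy as the paper: a constrained optimization over rotationally symmetric, origin-symmetric bodies, with the Euler--Lagrange equation derived from Lemma~\ref{lem11}, non-degeneracy of a maximizing sequence established by comparing $H_K$ to an ellipsoidal profile $\asymp a|x_n|+b|x'|$ and estimating $\int|x'|^\alpha|x_n|^\beta|Ax|^p\,dx$ case-by-case (the paper's Lemma~\ref{lem01}), and the bound \eqref{var-est} obtained by testing against a ball. The only cosmetic difference is that the paper swaps the roles of objective and constraint---maximizing $\int fg^p$ with $\widetilde V_q=\kappa_n$ fixed---which makes the degeneration analysis slightly cleaner since the constraint on $\widetilde V_q$ directly pins down the relationship between the two semi-axes; in your formulation you should be aware that, beyond the ``oblate'' and ``prolate'' collapses you name, you must also exclude one semi-axis tending to $\infty$, which the paper handles uniformly via its parameters $(r,a)$.
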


Note that although Theorem \ref{thm2} is restricted to the rotationally
symmetric even case of equation \eqref{pqMP}, it is the first existence result
for all $p<0<q$.
If, in addition $p,q$ satisfy $1+\frac{n}{p}<\frac{1}{p}+\frac{1}{q}<\frac{n}{q}-1$, the existence in the even case was
previously obtained in \cite{CCL}.

\vskip5pt

We also remark that our method of constructing two different solutions in Theorem
\ref{thm1} is completely different from that of \cite{CCL, HJ.JFA}.
In \cite{CCL, HJ.JFA}, by the variational method a maximiser of some functional
provides one solution, and the constant function is obviously another solution.
Hence, the main effort in \cite{CCL, HJ.JFA} is to prove that the constant cannot be a maximiser. 
Here in our theorem, one solution is still from a maximiser of some
functional. However, the constant function is not a solution any more. 
So, the main difficulty is how to construct another solution that cannot be a maximiser of the functional.
In fact, our approach is inspired by \cite{JLW.Adv.281-2015.845}, where the
non-uniqueness of solutions to the $L_p$-Minkowski problem was obtained.
Since equation \eqref{pqMP} is more complicated than that of the $L_p$-Minkowski
problem, we have refined and improved the construction in \cite{JLW.Adv.281-2015.845} by
introducing an additional parameter, see \S\ref{s4}.
Surprisingly, this improvement can extend the result of \cite{JLW.Adv.281-2015.845} to all
$p\in(-\infty,0)$, as a special case of Theorem \ref{thm1} with $q=n$.
\begin{corollary}
For any $p\in(-\infty,0)$, there exists an almost everywhere positive function $f\in C^\infty(\mathbb{S}^{n-1})$ such that the $L_p$-Minkowski problem,
\begin{equation*} 
  \det(\nabla^2H+HI) = f H^{p-1} \quad \text{ on } \ \uS,
\end{equation*}
admits two different solutions. 
If $p\in(-n,0)$, the function $f$ can be chosen to be positive on $\uS$. 
\end{corollary}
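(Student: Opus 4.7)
The plan is to deduce the corollary as a direct specialization of Theorem~\ref{thm1} with $q = n$. The first observation is that when $q = n$, the middle factor $|\delbar H|^{q-n}$ in equation~\eqref{pqMP} equals $1$, so that~\eqref{pqMP} reads $H^{1-p}\det(\nabla^2 H + HI) = f$, which is exactly the $L_p$-Minkowski equation of the corollary after multiplying both sides by $H^{p-1}$. Thus, any non-uniqueness produced by Theorem~\ref{thm1} at $q = n$ transfers verbatim to the $L_p$-Minkowski problem.

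For arbitrary $p < 0$ and $n \ge 2$, the pair $(p, q) = (p, n)$ satisfies $q > 1$ and $p < 0$, so Theorem~\ref{thm1}(2) directly supplies an almost everywhere positive function $f \in C^\infty(\uS)$ for which the $L_p$-Minkowski problem admits two distinct solutions. This establishes the first sentence of the corollary.

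For the second sentence, I would verify that the extra requirement $1 + \frac{n}{p} < \frac{1}{p} + \frac{1}{q} < \frac{n}{q} - 1$ appearing in Theorem~\ref{thm1}(2) collapses, on setting $q = n$, to the single condition $-n < p < 0$. The upper inequality becomes $\frac{1}{p} + \frac{1}{n} < 0$, equivalent to $-p < n$; the lower inequality rearranges to $\frac{n-1}{n} < \frac{n-1}{-p}$, again equivalent to $-p < n$ (using $n \ge 2$). Hence for every $p \in (-n, 0)$ Theorem~\ref{thm1}(2) provides a strictly positive $f$ on $\uS$ giving two different solutions.

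There is no real obstacle here beyond this bookkeeping, since all of the analytical difficulty is already packaged inside Theorem~\ref{thm1}. The content of the corollary is simply the observation that the non-uniqueness mechanism built for the dual $L_p$-Minkowski problem specializes cleanly to the classical $L_p$-case at $q = n$, where the gradient factor $|\delbar H|^{q-n}$ disappears from~\eqref{pqMP}.
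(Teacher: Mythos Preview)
Your proposal is correct and matches the paper's own treatment: the corollary is presented there explicitly as the special case $q=n$ of Theorem~\ref{thm1}, with no separate proof given beyond that remark. Your verification that the inequality $1+\tfrac{n}{p}<\tfrac{1}{p}+\tfrac{1}{q}<\tfrac{n}{q}-1$ reduces to $-n<p<0$ when $q=n$ is accurate and fills in the only detail left implicit.
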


This paper is organised as follows.
In \S\ref{s2}, we give some basic knowledge about convex bodies.
In \S\ref{s3}, we prove Theorem \ref{thm2} by a variational method.
In \S\ref{s4}, we prove Theorem \ref{thm1} based on the existence result in Theorem \ref{thm2} and a new construction of solutions.

\vskip5pt

\section{Preliminaries}\label{s2}

In this section we introduce some notations and preliminary results about convex bodies. 
The reader is referred to the newly expanded book \cite{Schneider.2014} of Schneider for a comprehensive introduction on the background.

A convex body $K$ is a compact convex subset of $\R^n$ with non-empty interior.
The support function of $K$ is given by
\begin{equation*}
  h_K(x) :=\max_{\xi\in K} \left\{\xi\cdot x\right\}, \quad x\in \uS,
\end{equation*}
where ``$\cdot$'' denotes the inner product in the Euclidean space $\R^n$.
It is well known that a convex body is uniquely determined by its support
function, and the convergence of a sequence of convex bodies is equivalent to
the uniform convergence of the corresponding support functions on $\uS$.
The Blaschke selection theorem says that every bounded sequence of convex bodies
has a subsequence that converges to a convex body.

Denote the set of positive continuous functions on $\uS$ by $C^+(\uS)$.
For $g\in C^+(\uS)$, the {\sl
Alexandrov body associated with $g$} is defined by
\begin{equation*}
  K := \bigcap_{x\in \uS} \set{\xi\in\R^n \, :\, \xi \cdot x\leq g(x)}.
\end{equation*}
One can see that $K$ is a bounded convex body and $0\in K$.
Note that
\begin{equation*}
  h_K(x) \leq g(x), \quad \forall\, x\in \uS.
\end{equation*}
Let $\widetilde{V}_q(g)$ be the $q$-th dual volume of the
Alexandrov body $K$ associated with $g$, defined in \eqref{qdvol}.
The following variational formula was obtained in \cite[Theorem 4.5]{HLYZ.Acta.216-2016.325}.

\begin{lemma} \label{lem11}
Let $\{G_t\}_{t\in(-\epsilon, \epsilon)}$ be a family of functions in $C^+(\uS)$, where $\epsilon>0$ is a small constant. 
  If there is a continuous function $g$ on $\uS$ such that
\begin{equation*}
  \lim_{t\to 0} \frac{G_t -G_0}{t}=g \quad \text{ uniformly on } \uS,
\end{equation*}
then
\begin{equation*}
  \lim_{t\to 0} \frac{\widetilde{V}_q(G_t) -\widetilde{V}_q(G_0)}{t}
  =\frac{q}{n} \int_{\uS} g(x) |\nu_K^{-1}(x)|^{q-n} \dd S_K(x),
\end{equation*}
where $K$ is the Alexandrov body associated with $G_0$,
$\nu_K^{-1}$ is the inverse Gauss map of $K$, and $\dd S_K$ is the surface area
measure of $K$.
\end{lemma}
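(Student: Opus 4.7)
The plan is to express $\widetilde{V}_q(G_t)$ via the radial function of the Alexandrov body $K_t$ of $G_t$ and differentiate directly under the integral sign. By definition,
\begin{equation*}
\widetilde{V}_q(G_t)=\frac{1}{n}\int_{\uS}\rho_{K_t}^q(u)\dd u,
\end{equation*}
so the task reduces to computing $\frac{d}{dt}\big|_{t=0}\rho_{K_t}(u)$ at a.e.\ $u\in\uS$ and justifying the passage of the derivative inside the integral. The key observation is that, since $K_t=\bigcap_{x\in\uS}\{\xi:\xi\cdot x\leq G_t(x)\}$, one has the minimum-principle characterization
\begin{equation*}
\rho_{K_t}(u)=\min_{x\cdot u>0}\frac{G_t(x)}{x\cdot u},
\end{equation*}
with the minimum attained at $x=\alpha_t(u):=\nu_{K_t}(\rho_{K_t}(u)u)$, the outer unit normal at the boundary point $\rho_{K_t}(u)u$, which is uniquely defined for $\mathcal{H}^{n-1}$-a.e.\ $u\in\uS$.

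Next, I would derive a two-sided envelope-type estimate by testing with two different competitors: using $x=\alpha_0(u)$ to bound $\rho_{K_t}(u)$ from above and $x=\alpha_t(u)$ to bound $\rho_K(u)$ from above,
\begin{equation*}
\frac{G_t(\alpha_t(u))-G_0(\alpha_t(u))}{\alpha_t(u)\cdot u}\leq\rho_{K_t}(u)-\rho_K(u)\leq\frac{G_t(\alpha_0(u))-G_0(\alpha_0(u))}{\alpha_0(u)\cdot u}.
\end{equation*}
Dividing by $t$ and using the uniform convergence $(G_t-G_0)/t\to g$ together with the Hausdorff continuity $K_t\to K$, which forces $\alpha_t(u)\to\alpha_0(u)$ for a.e.\ $u$, yields the pointwise limit
\begin{equation*}
\lim_{t\to 0}\frac{\rho_{K_t}(u)-\rho_K(u)}{t}=\frac{g(\alpha_0(u))}{\alpha_0(u)\cdot u}\quad\text{for a.e.\ }u\in\uS.
\end{equation*}
Since $G_t\to G_0$ in $C^+(\uS)$, uniform two-sided bounds $0<c\leq G_t\leq C$ supply uniform bounds on both $\rho_{K_t}$ and the difference quotient above, so the dominated convergence theorem gives
\begin{equation*}
\left.\frac{d}{dt}\right|_{t=0}\widetilde{V}_q(G_t)=\frac{q}{n}\int_{\uS}\rho_K^{q-1}(u)\,\frac{g(\alpha_0(u))}{\alpha_0(u)\cdot u}\dd u.
\end{equation*}

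Finally, I would pass from the radial parametrization to the Gauss parametrization of $\partial K$. Writing $\xi=\rho_K(u)u\in\partial K$, the radial Jacobian identity $\dd u=\frac{h_K(\alpha_0(u))}{\rho_K(u)^n}\dd\mathcal{H}^{n-1}(\xi)$ combined with the support-plane relation $\alpha_0(u)\cdot u=h_K(\alpha_0(u))/\rho_K(u)$ collapses the integrand to $\rho_K^{q-n}(u)\,g(\alpha_0(u))$ on $\partial K$; pushing forward by the Gauss map $\nu_K$ and recognizing $|\nu_K^{-1}(x)|=\rho_K(u(x))$ then produces the advertised formula with $|\nu_K^{-1}(x)|^{q-n}\dd S_K(x)$.

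The main obstacle is securing both the a.e.\ convergence $\alpha_t(u)\to\alpha_0(u)$ and a uniformly integrable dominator for the difference quotients. The former rests on the a.e.\ uniqueness of the outer normal for a convex body combined with the Hausdorff convergence of the Alexandrov bodies (which in turn follows from the uniform convergence $G_t\to G_0$); the latter is handled by the uniform $C^+$-bounds on $G_t$, which transfer to uniform bounds on $\rho_{K_t}$ and hence control the envelope estimate by a constant multiple of $\|g\|_\infty$ near $t=0$.
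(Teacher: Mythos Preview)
The paper does not supply its own proof of this lemma: it is stated as a quotation of Theorem~4.5 in \cite{HLYZ.Acta.216-2016.325} and used as a black box. Your outline is precisely the argument given in that reference, so in content you are reproducing the original proof rather than offering an alternative.

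Your sketch is essentially correct, but one point you flag as ``the main obstacle'' deserves a sharper statement. The uniform dominator for the difference quotients requires a \emph{uniform lower bound} on $\alpha_t(u)\cdot u$, not just on $\rho_{K_t}$. This follows because $\alpha_t(u)\cdot u = h_{K_t}(\alpha_t(u))/\rho_{K_t}(u)$, and the uniform $C^+$-bounds on $G_t$ yield both $h_{K_t}\geq c>0$ and $\rho_{K_t}\leq C$ for $t$ small; you should make this explicit. Likewise, the a.e.\ convergence $\alpha_t(u)\to\alpha_0(u)$ is not automatic from Hausdorff convergence alone: one needs that at any $u$ where $\partial K$ has a unique outer normal at $\rho_K(u)u$, any cluster point of $\alpha_t(u)$ must also be a normal there, hence equals $\alpha_0(u)$. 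This uses that normals are preserved under Hausdorff limits of convex bodies, which is standard but worth citing.
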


\vskip5pt

\section{The existence of solutions}\label{s3}

In this section, we prove the existence result in Theorem \ref{thm2}.
Let $C^+_{re}(\uS)$ denote the set of rotationally symmetric, even and positive continuous functions on $\uS$.
Let $f\in C^+_{re}(\uS)$ be the function satisfying \eqref{zero-cond}, where the indices $p, q, \alpha, \beta$ are as in the hypotheses of Theorem \ref{thm2}. 
Consider the following maximising problem:
\begin{align}
  & \sup J[g] := \int_{\uS} f(x) g(x)^p \dd x,\quad\text{for } g\in C^+_{re}(\uS),  \label{MaxP} \\
  &\text{subject to } \quad  \widetilde{V}_q(K_g)=\kappa_n, \label{J}
\end{align}
where $\kappa_n$ is the volume of the unit ball in $\R^n$, and $K_g$ is the Alexandrov body associated with $g$.

\begin{lemma}\label{lem06}
  Under the assumptions of Theorem \ref{thm2}, the maximising problem
  \eqref{MaxP} has a solution $h$. In addition, the solution $h$ is the support
  function of $K_h$. 
\end{lemma}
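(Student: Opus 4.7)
The plan is to apply the direct method of the calculus of variations. The first step is to reduce the maximisation to support functions: for any admissible $g\in C^+_{re}(\uS)$, set $\tilde g := h_{K_g}$. Then $\tilde g\leq g$ pointwise, the Alexandrov body of $\tilde g$ is again $K_g$ (so the constraint $\widetilde V_q=\kappa_n$ is preserved), and since $p<0$ we have $\tilde g^{\,p}\geq g^p$, giving $J[\tilde g]\geq J[g]$. Rotational symmetry and evenness pass from $g$ to $K_g$ and hence to $\tilde g$. Thus it suffices to maximise over support functions of rotationally symmetric even convex bodies $K$ with $\widetilde V_q(K)=\kappa_n$, and any maximiser produced this way automatically satisfies $h=h_{K_h}$, which is the second conclusion of the lemma.

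Next, take a maximising sequence $h_k=h_{K_k}$ of such support functions. I would establish uniform compactness of $\{K_k\}$. The constraint $\widetilde V_q(K_k)=\kappa_n$ rules out collapse to a point, giving a uniform positive lower bound on the diameter. For the upper bound, suppose $\max h_k\to\infty$ along a subsequence. Rotational symmetry and evenness force $K_k$ to elongate either along the axis $e_n$ or throughout the equatorial hyperplane, and the fixed $\widetilde V_q$ then forces $K_k$ to become thin in the complementary direction, so $h_k$ tends to zero on a set of positive solid angle around a pole or equatorial point. Since $p<0$, $h_k^p$ blows up there; the lower bounds on $\alpha$ and $\beta$ in the hypothesis are exactly those under which \eqref{zero-cond} keeps $\int f\,h_k^p\,\dd x$ finite, and a quantitative version of the same estimate shows that $J[h_k]<\norm{f}_{L^1(\uS)}$ whenever $\DIAM(K_k)$ is sufficiently large. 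But the constant function $g\equiv 1$ is admissible with $J[g\equiv 1]=\norm{f}_{L^1(\uS)}$, so this would contradict the maximising property; hence $\{K_k\}$ lies in a compact family.

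Once compactness is in place, Blaschke selection extracts a subsequence with $K_k\to K$ in the Hausdorff metric, equivalently $h_{K_k}\to h_K$ uniformly on $\uS$. Continuity of the $q$-th dual volume gives $\widetilde V_q(K)=\kappa_n>0$, which forces $K$ to have non-empty interior (otherwise $\rho_K$ would vanish off a null set and $\widetilde V_q(K)=0$). By evenness the origin lies in the interior of $K$, so $h_K\geq c>0$ on $\uS$. Thus $h_{K_k}\geq c/2$ for large $k$, $h_{K_k}^p$ is uniformly bounded above by $(c/2)^p$, and dominated convergence yields $J[h_{K_k}]\to J[h_K]$. The limit $h:=h_K$ therefore realises the supremum, and is by construction the support function of $K_h=K$.

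The main obstacle is the uniform upper bound on $\{K_k\}$ in the second step. This is where the precise lower bounds on $\alpha$ and $\beta$ are essential: one needs a weighted integral estimate on $f\cdot h_K^p$ that combines the prescribed vanishing of $f$ at the poles and equator with the geometry of rotationally symmetric even bodies of fixed $q$-th dual volume. The remaining steps — Blaschke compactness, continuity of $\widetilde V_q$, and dominated convergence — are then routine.
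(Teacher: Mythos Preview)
Your proposal is correct and follows essentially the same route as the paper: reduce to support functions (using $p<0$ and $h_{K_g}\le g$), take a maximising sequence, rule out degeneration by contradiction against the test function $g\equiv 1$ with $J[1]=\norm{f}_{L^1}$, then pass to the limit via Blaschke selection. The paper executes your ``quantitative estimate'' by comparing $K_g$ with its John ellipsoid, whose rotational symmetry reduces it to a diagonal matrix $A_g$ with two free scalars; the constraint $\widetilde V_q=\kappa_n$ pins $\int_{\uS}|A_g^{-1}x|^{-q}$ to a fixed window, and a separate case-by-case lemma then shows $F(A):=\int_{\uS}|x'|^\alpha|x_n|^\beta|Ax|^p\to 0$ as the ellipsoid degenerates --- this is precisely where the thresholds on $\alpha,\beta$ enter (they give decay, not merely finiteness). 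One organisational difference: the paper bounds $\max h_k$ and $\min h_k$ simultaneously via $F(A_{h_k})\to0$, whereas you bound only $\max h_k$ and recover the interior of the limit body afterwards from continuity of $\widetilde V_q$; both orderings work.
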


Assuming this for the moment, we can then prove that a multiple of $h$ is a rotationally symmetric even solution to equation \eqref{pqMP}.

\begin{proof}[\textbf{Proof of Theorem \ref{thm2}}]
Let $h$ be the solution obtained in Lemma \ref{lem06}. 
For any given rotationally symmetric and even $\eta\in C(\uS)$, let
\begin{equation*}
  \phi_t=h+t\eta \quad \text{ for } t\geq0.
\end{equation*}
Since $h\in C^+_{re}(\uS)$,  for $t$ sufficiently small $\phi_t\in C^+_{re}(\uS)$ as well.
By Lemma \ref{lem11}, we have
\begin{equation} \label{eq:51}
  \lim_{t\to0^+} \frac{\widetilde{V}_q(K_{\phi_t}) -\widetilde{V}_q(K_h)}{t}
  =\frac{q}{n} \int_{\uS} \eta(x) |\nu_{K_h}^{-1}(x)|^{q-n} dS_{K_h}(x).
\end{equation}

Let $g_t(x)=\lambda(t)\phi_t(x)$, where
\begin{equation*}
\lambda(t)=\widetilde{V}_q(K_{\phi_t})^{-1/q}\kappa_n^{1/q}.
\end{equation*}
Then $g_t\in C^+_{re}(\uS)$, and $\widetilde{V}_q(K_{g_t})=\kappa_n$. 
Note that $g_0(x)=h(x)$, and
\begin{equation}\label{eq:52}
  \lim_{t\to0^+}\frac{g_t(x)-g_0(x)}{t}=\eta(x)+\lambda'(0)h(x) \quad\text{uniformly on }\uS.
\end{equation}
Also by \eqref{eq:51},
\begin{equation} \label{eq:53}
  \begin{split}
    \lambda'(0)
    &= -\frac{\kappa_n^{1/q}}{q} \widetilde{V}_q(K_{\phi_t})^{-\frac{1}{q}-1}
    \frac{\dd \widetilde{V}_q(K_{\phi_t})}{\dd t} \Big|_{t=0} \\
    &= -\frac{1}{n \kappa_n} \int_{\uS} \eta(x) |\nu_{K_h}^{-1}(x)|^{q-n} dS_{K_h}(x).
  \end{split}
\end{equation}

Let
\begin{equation*}
J(t):=J[g_t].
\end{equation*}
Note that $J(0)=J[h]$. Since $h$ is a maximister of \eqref{MaxP}, we have
\begin{equation*}
J(t)\leq J(0)
\end{equation*}
for all small $t\geq0$. Thus
\begin{equation*}
  \lim_{t_k\to0^+} \frac{J(t_k)-J(0)}{t_k}\leq0
\end{equation*}
for any convergent subsequence $\set{t_k}$.
Recalling \eqref{eq:52}, we see the above inequality can be simplified as
\begin{equation} \label{eq:55}
  \int_{\uS} p h(x)^{p-1} [\eta(x)+\lambda'(0)h(x)] f(x) \dd x \leq0.
\end{equation}
Since $p<0$, by \eqref{eq:53} we obtain
\begin{equation*}
\int_{\uS} \eta(x) h(x)^{p-1} f(x) \dd x -c\int_{\uS} \eta(x) |\nu_{K_h}^{-1}(x)|^{q-n} dS_{K_h}(x) \geq 0,
\end{equation*}
where
\begin{equation} \label{eq:57}
c=\frac{1}{n\kappa_n} \int_{\uS} fh^p.
\end{equation}
Replacing $\eta$ by $-\eta$, we see that
\begin{equation*}
\int_{\uS} \eta(x) h(x)^{p-1} f(x) \dd x -c\int_{\uS} \eta(x) |\nu_{K_h}^{-1}(x)|^{q-n} dS_{K_h}(x) = 0
\end{equation*}
for all rotationally symmetric and even $\eta\in C(\uS)$.
Note that $h$, $K_h$ and $f$ are rotationally symmetric and even, we obtain
\begin{equation*}
 h(x)^{p-1} f(x) \dd x =c |\nu_{K_h}^{-1}(x)|^{q-n} dS_{K_h}(x),
\end{equation*}
namely $h$ is a generalised solution to the equation
\begin{equation}\label{eq:56}
  h^{p-1}f=c\,|\delbar h|^{q-n}\det(\nabla^2h+hI) \quad \text{on }\uS.
\end{equation}

Let
\begin{equation*}
  H=c^{\frac{1}{q-p}}h,
\end{equation*}
then $H$ is a generalised solution to equation \eqref{pqMP}, namely 
\begin{equation*}
  H^{1-p} |\delbar H|^{q-n} \det(\nabla^2H+HI) = f \quad \text{on } \uS.
\end{equation*}
Now it remains to verify inequality \eqref{var-est}.
In fact, noting that
\begin{equation} \label{eq:54}
  \widetilde{V}_q(K_H)
  =c^{\frac{q}{q-p}}\widetilde{V}_q(K_h)
  =c^{\frac{q}{q-p}}\kappa_n,
\end{equation}
we need to estimate $c$.
Recalling \eqref{eq:57}, one has
\begin{equation*}
  \begin{split}
  c
  &=\frac{1}{n\kappa_n} J[h] \\
  &\geq \frac{1}{n\kappa_n} J[1] \\
  &= \frac{1}{n\kappa_n} \norm{f}_{L^1(\uS)}.
  \end{split}
\end{equation*}
The above estimate together with \eqref{eq:54} implies the estimate
\eqref{var-est}.
The proof of Theorem \ref{thm2} is completed.
\end{proof}

Therefore, it suffices to prove Lemma \ref{lem06}, namely the existence of maximiser $h$ of \eqref{MaxP}--\eqref{J}, which was crucially used in the above proof of Theorem \ref{thm2}.

\begin{proof}[Proof of Lemma \ref{lem06}]
For any $g\in C^+_{re}(\uS)$ satisfying $\widetilde{V}_q(K_g)=\kappa_n$, let $E_g$ be
the minimum ellipsoid of $K_g$. 
One has
\begin{equation*}
\frac{1}{n} E_g \subset K_g \subset E_g,
\end{equation*}
which implies that
\begin{equation} \label{eq:48}
  \begin{aligned}
    \frac{1}{n} h_{E_g} &\leq h_{K_g} \leq h_{E_g} \quad \text{on } \uS, \\
    \frac{1}{n} \rho_{E_g} &\leq \rho_{K_g} \leq \rho_{E_g} \quad \text{on } \uS,
  \end{aligned}
\end{equation}
where $h_\bullet$ and $\rho_\bullet$ are the corresponding support function and radial
function, respectively. 
Since $K_{g}$ is rotationally symmetric and even, $E_g$ is
also rotationally symmetric and even.
Therefore, the centre of $E_g$ is at the origin, and there exists a unique
rotationally symmetric matrix $A_g$ of the form 
\begin{equation} \label{A}
  A=\begin{pmatrix}
    ra^{\frac{1}{n}}&&& \\
    &\ddots&& \\
    &&ra^{\frac{1}{n}}& \\
    &&&ra^{\frac{1-n}{n}}
  \end{pmatrix},
  \quad \text{where }r>0, \ a>0\text{ are constants,}
\end{equation}
such that
\begin{equation} \label{eq:49}
  \begin{aligned}
    h_{E_g}(x)&=|A_gx|  &&\text{on } \uS, \\
    \rho_{E_g}(u)&=|A_g^{-1}u|^{-1}  &&\text{on } \uS.
  \end{aligned}
\end{equation}
By computation we have
\begin{equation*}
  \begin{split}
    \int_{\uS} |A_g^{-1}x|^{-q}
    &= \int_{\uS} \rho_{E_g}(u)^q \dd u \\
    &\leq n^q \int_{\uS} \rho_{K_g}(u)^q \dd u \\
    &= n^{q+1} \widetilde{V}_q(K_g) \\
    &= n^{q+1} \kappa_n,
  \end{split}
\end{equation*}
and
\begin{equation*}
  \begin{split}
    \int_{\uS} |A_g^{-1}x|^{-q}
    &= \int_{\uS} \rho_{E_g}(u)^q \dd u \\
    &\geq \int_{\uS} \rho_{K_g}(u)^q \dd u \\
    &= n \widetilde{V}_q(K_g) \\
    &= n \kappa_n.
  \end{split}
\end{equation*}
If we set $\Lambda=\max\set{n^{q+1}\kappa_n, (n\kappa_n)^{-1}}$, then
\begin{equation}\label{newb}
\Lambda^{-1}\leq \int_{\uS} |A_g^{-1}x|^{-q} \leq \Lambda.
\end{equation}


From the assumption \eqref{zero-cond}, and noting that
\begin{equation*}
  g(x)\geq h_{K_g}(x)\geq \frac{1}{n}h_{E_g}(x)
  =\frac{1}{n} |A_gx|
  \quad \text{on }\uS,
\end{equation*}
since $p<0$, one can obtain 
\begin{equation} \label{eq:4}
  \begin{split}
    J[g]
    &=\int_{\uS} f(x) g(x)^p \dd x \\
    &\leq n^{-p} \int_{\uS} f(x) |A_gx|^p \dd x \\
    &\leq C n^{-p} \int_{\uS} |x'|^\alpha \,|x_n|^\beta \,|A_gx|^p \dd x \\
    &=: C n^{-p} F(A_g),
  \end{split}
\end{equation}
where $C$ is the constant given in Theorem \ref{thm2}.
Thanks to \eqref{newb}, we \emph{claim} that
\begin{equation}\tag{{\bf \emph{Claim 1}}}\label{c1}
F(A_g) =\int_{\uS} |x'|^\alpha \,|x_n|^\beta \,|A_gx|^p \dd x \leq C,
\end{equation}
is bounded by a constant $C=C(n,p,q,\alpha,\beta)$, for all $g\in C^+_{re}(\uS)$ satisfying \eqref{J}, which then implies that
\begin{equation*}
  M :=
  \sup \set{ J[g] \,:\, g\in C^+_{re}(\uS), \  \widetilde{V}_q(K_g)=\kappa_n }
  < +\infty.
\end{equation*}

Assuming this at the moment, let $\set{g_k}\subset C^+_{re}(\uS)$ with $\widetilde{V}_q(K_{g_k})=\kappa_n$ be
a maximising sequence.
Denote $h_k=h_{K_{g_k}}$, namely $h_k$ is the support function of $K_{g_k}$.
Then
\begin{equation*}
h_k(x)\leq g_k(x), \quad \forall x\in\uS,
\end{equation*}
and $K_{h_k}=K_{g_k}$. Since $K_{g_k}$ is rotationally symmetric and even,
$h_k\in C^+_{re}(\uS)$.
Since $p<0$, one has $J[h_k]\geq J[g_k]$, which implies that
\begin{equation*}
\lim_{k\to+\infty} J[h_k]=\lim_{k\to+\infty} J[g_k]=M.
\end{equation*}
Namely, $\set{h_k}$ is also a maximising sequence of \eqref{MaxP}.

In order to show that $\set{h_k}$ has uniform positive upper and lower bounds on $\uS$,
we may suppose to the contrary that
\begin{equation} \label{eq:50}
  \lim_{k\to+\infty} \max_{x\in\uS} h_k(x)=+\infty
  \quad\text{or}\quad
  \lim_{k\to+\infty} \min_{x\in\uS} h_k(x)=0.
\end{equation}
For each $h_k$, recalling the construction at the beginning of the proof, there
exists a unique matrix $A_{h_k}$ of form \eqref{A}, satisfying \eqref{newb}.
Denote the corresponding $r$ and $a$ by $r_k$ and $a_k$ respectively.
By virtue of \eqref{eq:48} and \eqref{eq:49}, we have
\begin{equation*}
\frac{1}{n} |A_{h_k}x|\leq h_k(x)\leq |A_{h_k}x|, \quad \forall x\in\uS,
\end{equation*}
which together with \eqref{eq:50} implies that
\begin{equation*}
  \lim_{k\to+\infty} \max_{x\in\uS} |A_{h_k}x| =+\infty
  \quad\text{or}\quad
  \lim_{k\to+\infty} \min_{x\in\uS} |A_{h_k}x| =0.
\end{equation*}
By \eqref{A}, this means that
\begin{equation*}
  r_ka_k^{\frac{1}{n}}\to+\infty
  \quad\text{or}\quad
  r_ka_k^{\frac{1-n}{n}}\to+\infty 
  \quad\text{or}\quad
  r_ka_k^{\frac{1}{n}}\to0
  \quad\text{or}\quad
  r_ka_k^{\frac{1-n}{n}}\to0
\end{equation*}
as $k\to+\infty$.
Hence, one of the four cases $r_k\to+\infty$, $r_k\to0$, $a_k\to+\infty$, $a_k\to0$ must
occur. 

If one of the above cases occurs, we next \emph{claim} at the moment that
\begin{equation}\tag{{\bf \emph{Claim 2}}}\label{c2}
F(A_{h_k})\to0,\quad \text{ as } k\to+\infty.
\end{equation}
Recalling \eqref{eq:4}, we thus obtain
\begin{equation*}
\lim_{k\to+\infty} J[h_k]=0,
\end{equation*}
namely $M=0$.
However, choosing $g\equiv1$, one can see that
\begin{equation} \label{eq:58}
M\geq J[1] =\norm{f}_{L^1(\uS)} >0,
\end{equation}
which is a contradiction. 
Therefore, $\set{h_k}$ has uniform positive upper and lower
bounds. 

By the Blaschke selection theorem, there is a subsequence of $\set{h_k}$ that
uniformly converges to some support function $h$ on $\uS$. Correspondingly,
$K_{h_k}$ converges to $K_h$, which is the convex body determined by $h$.
Obviously, $h$ is rotationally symmetric and even on $\uS$, satisfying $h>0$,
$\widetilde{V}_q(K_h)=\kappa_n$, and
\begin{equation*}
J[h]=\lim_{k\to+\infty}J[h_k]=M.
\end{equation*}
Hence, we see that $h$ is a solution to the maximising problem \eqref{MaxP}--\eqref{J}, and
$h$ is the support function of $K_h$.
\end{proof}

Last, we prove the two claims used in the above proof. 
Let $\Lambda$ be the same constant as in \eqref{newb}.
Define
\begin{equation} \label{eq:5}
  \A :=\set{A \text{ has the form of \eqref{A}} \,:\,
    \Lambda^{-1} \leq \int_{\uS} |A^{-1}x|^{-q} \leq \Lambda}.
\end{equation}
Both {\bf Claims 1\&2} are proved in the following lemma. 

We need to estimate the following integral:
\begin{lemma}\label{lem01}
Assume the indices $p$, $q$, $\alpha$, $\beta$ satisfy the assumptions of Theorem \ref{thm2}. 
Let
\begin{equation} \label{FA}
  F(A) :=\int_{\uS} |x'|^\alpha \,|x_n|^\beta \,|Ax|^p \dd x.
\end{equation}

Then, there exists a positive constant $C$ only depending on $n$, $\Lambda$, $p$, $q$,
$\alpha$ and $\beta$, such that
\begin{equation} \label{eq:6}
  \sup_{A\in\A} F(A) \leq C.
\end{equation}
Moreover, for any sequence $\set{A_k}\subset\A$ with corresponding $r_k$ and
$a_k$, if one of the four cases $r_k\to\infty$, $r_k\to0$, $a_k\to\infty$,
$a_k\to0$ occurs, then one has $F(A_k)\to0$. 
\end{lemma}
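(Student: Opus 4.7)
The plan is to exploit the rotational form of $A$ to reduce both $F(A)$ and the constraint integral $\int_{\uS}|A^{-1}x|^{-q}dx$ to one-variable integrals in $a$, eliminate $r$ via the constraint $A\in\A$, and analyse the resulting scalar function asymptotically.

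Writing $x=(\sin\theta\,\omega,\cos\theta)\in\uS$ with $\omega\in\mathbb{S}^{n-2}$ and factoring $a^{2/n}$ from $|Ax|^2=r^2 a^{2/n}(\sin^2\theta+a^{-2}\cos^2\theta)$ (and similarly for $|A^{-1}x|^2$), integrating out the $\mathbb{S}^{n-2}$-factor and using symmetry about $\pi/2$ gives
$$F(A)=2\omega_{n-2}\,r^p a^{p/n}\,H(a),\qquad \int_{\uS}|A^{-1}x|^{-q}dx=2\omega_{n-2}\,r^q a^{q/n}\,G(a),$$
with
$$H(a)=\int_0^{\pi/2}\sin^{\alpha+n-2}\theta\,\cos^\beta\theta\,(\sin^2\theta+a^{-2}\cos^2\theta)^{p/2}\,d\theta,$$
$$G(a)=\int_0^{\pi/2}\sin^{n-2}\theta\,(\sin^2\theta+a^{2}\cos^2\theta)^{-q/2}\,d\theta.$$
The hypothesis $A\in\A$ becomes $r^q a^{q/n}G(a)\asymp 1$, so $F(A)\asymp G(a)^{-p/q}H(a)=:\Phi(a)$ and everything reduces to understanding the scalar $\Phi$.

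I will derive the asymptotics of $G$ and $H$ as $a\to 0$ and $a\to\infty$ by splitting each integral at the balance scale $\sin^2\theta\sim a^{\pm 2}\cos^2\theta$, which yields piecewise power-type behaviour whose exponents depend on whether $q$ lies above or below the thresholds $1$ and $n-1$ (for $G$), and whether $\alpha,\beta$ lie above or below $1-n-p$ and $-1-p$ (for $H$). Combining these, the net exponent of $a$ in $\Phi(a)$ turns out to be strictly negative in every subcase precisely because the hypotheses
$$\alpha>\max\{1-n,\,1-n+\tfrac{p(1-q)}{q}\},\qquad \beta>\max\{-1,\,-1+\tfrac{p(n-1-q)}{q}\}$$
simultaneously handle the subcritical and supercritical regimes of $\alpha,\beta$: the correction terms $\tfrac{p(1-q)}{q}$ and $\tfrac{p(n-1-q)}{q}$ are exactly the exponent shifts produced when the supercritical regime of $G$ is paired with that of $H$. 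This proves $\Phi(a_k)\to 0$ whenever $a_k\to 0$ or $a_k\to\infty$. For the remaining cases $r_k\to\infty$ or $r_k\to 0$, the constraint $r_k^q a_k^{q/n}G(a_k)\asymp 1$ forces $a_k\to 0$ or $a_k\to\infty$ (otherwise $a_k^{q/n}G(a_k)$ stays bounded between positive constants, contradicting $r_k^q\to\infty$ or $0$), so the preceding analysis applies. The uniform bound $\sup_{A\in\A}F(A)<\infty$ then follows from the continuity of $\Phi$ on $(0,\infty)$ (by dominated convergence, using $\alpha+n-2>-1$ and $\beta>-1$) combined with the boundary decay just established.

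The hardest part is the bookkeeping in the asymptotic analysis: for each of the four limits there are several subcases depending on which side of the thresholds $q,\alpha,\beta$ lie, and one must verify in each that the hypotheses produce a strictly negative power of $a$. The tightness of the hypotheses comes precisely from the doubly-supercritical subcase, where the correction terms inside the two maxima appear.
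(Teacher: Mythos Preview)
Your approach is correct and is essentially the same as the paper's: the paper also reduces to one-variable integrals in $\theta$, splits into the three regimes $a>3$, $1/3\le a\le 3$, $a<1/3$, derives two-sided asymptotics for $\int_{\uS}|A^{-1}x|^{-q}$ (your $G$) to pin down $r$ in terms of $a$, and then bounds $F(A)$ (your $H$) by the same balance-scale splitting, checking in each subcase that the hypotheses on $\alpha,\beta$ yield the required sign on the exponent of $a$. Your packaging via $\Phi(a)=G(a)^{-p/q}H(a)$ is a tidy way to phrase exactly that computation; just be careful with the phrase ``strictly negative in every subcase'' --- you mean the exponent has the correct sign for decay at each end (negative as $a\to\infty$, positive as $a\to 0$).
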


\begin{proof}
The set $\A$ can be divided into the following three disjoint subsets:
 \begin{equation*}
\A=\A_1\cup\A_2\cup\A_3,
\end{equation*}
where
\begin{align*}
  \A_1&:=\A\cap\set{A : a>3}, \\
  \A_2&:=\A\cap\set{A : 1/3\leq a\leq 3}, \\
  \A_3&:=\A\cap\set{A : a<1/3}.
\end{align*}
To prove this lemma, it suffices to prove it on each subset $\A_i$, for $i=1,2,3$.
Note that if not explicitly stated, the positive constants $C$ and $\tilde{C}$ in the following proof only depend on $n$, $\Lambda$, $p$, $q$,
$\alpha$ and $\beta$, but may vary from line to line.

\textbf{Case $\A_1$.}
We first claim that there exist two positive constants $C$ and $\tilde{C}$ such that for any
$A\in\A_1$, one has
\begin{equation} \label{eq:11}
  \begin{aligned} 
    Ca^{1-\frac{1}{n}} &\leq r \leq \tilde{C}a^{1-\frac{1}{n}}, && \text{ if } q<1, \\
    Ca^{1-\frac{1}{n}}(\log a)^{-1} &\leq r \leq \tilde{C}a^{1-\frac{1}{n}}(\log a)^{-1}, && \text{ if } q=1, \\
    Ca^{\frac{1}{q}-\frac{1}{n}} &\leq r \leq \tilde{C}a^{\frac{1}{q}-\frac{1}{n}}, && \text{ if } q>1.
  \end{aligned}
\end{equation}

In fact, from \eqref{A}
\begin{equation*}
A^{-1}=\begin{pmatrix}
    r^{-1}a^{-\frac{1}{n}}&&& \\
    &\ddots&& \\
    &&r^{-1}a^{-\frac{1}{n}}& \\
    &&&r^{-1}a^{\frac{n-1}{n}}
  \end{pmatrix}, 
\end{equation*}
thus
\begin{equation*}
|A^{-1}x|=r^{-1}a^{1-\frac{1}{n}}\sqrt{a^{-2}|x'|^2+x_n^2}.
\end{equation*}
Computing in the spherical coordinates, we then have
\begin{equation} \label{eq:9}
  \begin{split}
    \int_{\uS} |A^{-1}x|^{-q}
    &= r^q a^{\frac{q}{n}-q} \int_{\uS} (a^{-2}|x'|^2+x_n^2)^{-\frac{q}{2}} \dd x \\
    &= 2r^q a^{\frac{q}{n}-q} \,\omega_{n-2} \int_0^{\frac{\pi}{2}}
    (a^{-2}\sin^2\theta+\cos^2\theta)^{-\frac{q}{2}} \sin^{n-2}\theta \dd\theta,
  \end{split}
\end{equation}
where $\omega_{n-2}$ denotes the surface area of the unit ball in $\R^{n-1}$.
Since $a>3$, we have
\begin{equation} \label{eq:8}
  \begin{split}
    \int_{\uS} |A^{-1}x|^{-q}
    &\geq C r^q a^{\frac{q}{n}-q} \Bigl[  \int_0^{\frac{\pi}{4}}  \sin^{n-2}\theta \dd\theta 
    +\int_{\frac{\pi}{4}}^{\frac{\pi}{2}} \bigl(a^{-2}+(\tfrac{\pi}{2}-\theta)^2\bigr)^{-\frac{q}{2}} \dd\theta \Bigr] \\
    &\geq C r^q a^{\frac{q}{n}-q} \Bigl[ 1
    +\int_0^{\frac{\pi}{4}} \bigl(a^{-2}+t^2\bigr)^{-\frac{q}{2}} \dd t \Bigr] \\
    &\geq C r^q a^{\frac{q}{n}-q} \Bigl[ 1 +\int_0^{\frac{\pi}{4}} \bigl(a^{-1}+t\bigr)^{-q} \dd t \Bigr].
  \end{split}
\end{equation}

Since
\begin{equation*}
\int_0^{\frac{\pi}{4}} \bigl(a^{-1}+t\bigr)^{-q} \dd t
=\begin{cases}
  \frac{1}{1-q} \bigl[ (a^{-1}+\tfrac{\pi}{4})^{1-q} -a^{q-1}  \bigr], & \text{ if } q<1, \\
  \log(1+\tfrac{\pi}{4}a), & \text{ if } q=1, \\
  \frac{1}{q-1} \bigl[ a^{q-1} -(a^{-1}+\tfrac{\pi}{4})^{1-q} \bigr], & \text{ if } q>1,
\end{cases}
\end{equation*}
we obtain from \eqref{eq:8} that
\begin{equation} \label{eq:10}
  \int_{\uS} |A^{-1}x|^{-q}
  \geq C r^qa^{\frac{q}{n}-q}
  \begin{cases}
    1, & \text{ if } q<1, \\
    \log a, & \text{ if } q=1, \\
    a^{q-1}, & \text{ if } q>1.
  \end{cases}
\end{equation}

On the other hand, from \eqref{eq:9}, we also have
\begin{equation*}
    \int_{\uS} |A^{-1}x|^{-q}
    \leq 
    \tilde{C} r^q a^{\frac{q}{n}-q} \Bigl[  \int_0^{\frac{\pi}{4}}  \sin^{n-2}\theta \dd\theta 
    +\int_{\frac{\pi}{4}}^{\frac{\pi}{2}} \bigl(a^{-2}+(\tfrac{\pi}{2}-\theta)^2\bigr)^{-\frac{q}{2}} \dd\theta \Bigr].
\end{equation*}
Similarly as in \eqref{eq:8}--\eqref{eq:10}, one can obtain
\begin{equation} \label{eq:12}
  \int_{\uS} |A^{-1}x|^{-q}
  \leq \tilde{C} r^qa^{\frac{q}{n}-q}
  \begin{cases}
    1, & \text{ if } q<1, \\
    \log a, & \text{ if } q=1, \\
    a^{q-1}, & \text{ if } q>1.
  \end{cases}
\end{equation}

By combining \eqref{eq:10}, \eqref{eq:12} and the definition of $\A$ (see
\eqref{eq:5}), we have
\begin{equation*}
  \begin{aligned} 
    \tilde{C}^{-1}\Lambda^{-1} &\leq r^qa^{\frac{q}{n}-q} \leq C^{-1}\Lambda,  && \text{ if } q<1, \\
    \tilde{C}^{-1}\Lambda^{-1} &\leq ra^{\frac{1}{n}-1}\log a \leq C^{-1}\Lambda,  && \text{ if } q=1, \\
    \tilde{C}^{-1}\Lambda^{-1} &\leq r^qa^{\frac{q}{n}-1} \leq C^{-1}\Lambda,  && \text{ if } q>1,
  \end{aligned}
\end{equation*}
which implies the claim \eqref{eq:11}.

Now, we estimate $F(A)$ given in \eqref{FA}. Noting that
\begin{equation*}
  |Ax|=ra^{\frac{1}{n}}\sqrt{|x'|^2+a^{-2}x_n^2},
\end{equation*}
we compute in the spherical coordinates as follows:
\begin{equation} \label{eq:7}
  \begin{split}
    F(A)
    &= r^p a^{\frac{p}{n}} \int_{\uS} |x'|^\alpha \,|x_n|^\beta (|x'|^2+a^{-2}x_n^2)^{\frac{p}{2}} \dd x \\
    &= 2r^p a^{\frac{p}{n}} \omega_{n-2} \int_0^{\frac{\pi}{2}} \sin^\alpha\!\theta \cos^\beta\!\theta
    \,(\sin^2\theta+a^{-2}\cos^2\theta)^{\frac{p}{2}} \sin^{n-2}\theta \dd\theta \\
    &\leq C r^p a^{\frac{p}{n}} \int_0^{\frac{\pi}{4}} \theta^{\alpha+n-2} (\theta^2+a^{-2})^{\frac{p}{2}}  \dd\theta \\
    &\hskip1.1em + C r^p a^{\frac{p}{n}} \int_{\frac{\pi}{4}}^{\frac{\pi}{2}} (\tfrac{\pi}{2}-\theta)^\beta
    \bigl(1+a^{-2}(\tfrac{\pi}{2}-\theta)^2\bigr)^{\frac{p}{2}} \dd\theta.
  \end{split}
\end{equation}
Recalling that $a>3$, we can further estimate these integrals:
\begin{equation} \label{eq:13}
  \begin{split}
    \int_0^{\frac{\pi}{4}} (\theta^2+a^{-2})^{\frac{p}{2}} \theta^{\alpha+n-2} \dd\theta
    &\leq \int_0^{\frac{1}{a}} a^{-p} \theta^{\alpha+n-2} \dd\theta
    +\int_{\frac{1}{a}}^{\frac{\pi}{4}} \theta^p \,\theta^{\alpha+n-2} \dd\theta \\
    &= \frac{a^{-p-\alpha-n+1}}{\alpha+n-1} 
    +\int_{\frac{1}{a}}^{\frac{\pi}{4}} \theta^{p+\alpha+n-2} \dd\theta \\
    &\leq C \begin{cases}
      1, & \text{ if } p+\alpha+n-1>0, \\
      \log a, & \text{ if } p+\alpha+n-1=0, \\
      a^{-p-\alpha-n+1}, & \text{ if } p+\alpha+n-1<0,
    \end{cases}
  \end{split}
\end{equation}
and
\begin{equation} \label{eq:14}
  \begin{split}
    \int_{\frac{\pi}{4}}^{\frac{\pi}{2}} (\tfrac{\pi}{2}-\theta)^\beta
    \bigl(1+a^{-2}(\tfrac{\pi}{2}-\theta)^2\bigr)^{\frac{p}{2}} \dd\theta
    &= \int_0^{\frac{\pi}{4}} t^\beta \bigl(1+a^{-2}t^2\bigr)^{\frac{p}{2}} \dd t \\
    &\leq C \int_0^{\frac{\pi}{4}} t^\beta \dd t \\
    &\leq C.
  \end{split}
\end{equation}
Inserting \eqref{eq:13} and \eqref{eq:14} into \eqref{eq:7}, we obtain
\begin{equation} \label{eq:15}
  F(A)\leq Cr^pa^{\frac{p}{n}}
  \begin{cases}
    1, & \text{ if } p+\alpha+n-1>0, \\
    \log a, & \text{ if } p+\alpha+n-1=0, \\
    a^{-p-\alpha-n+1}, & \text{ if } p+\alpha+n-1<0.
  \end{cases}
\end{equation}

Recall that any $A\in\A_1$ must satisfy \eqref{eq:11}. The estimate given by
\eqref{eq:15} can be simplified as follows.
When $q<1$, one has
\begin{equation} \label{eq:16}
  F(A)\leq C
  \begin{cases}
    a^p, & \text{ if } p+\alpha+n-1>0, \\
    a^p\log a, & \text{ if } p+\alpha+n-1=0, \\
    a^{-\alpha-n+1}, & \text{ if } p+\alpha+n-1<0.
  \end{cases}
\end{equation}
When $q=1$, one has
\begin{equation} \label{eq:17}
  F(A)\leq C
  \begin{cases}
    a^p(\log a)^{-p}, & \text{ if } p+\alpha+n-1>0, \\
    a^p(\log a)^{1-p}, & \text{ if } p+\alpha+n-1=0, \\
    a^{-\alpha-n+1}(\log a)^{-p}, & \text{ if } p+\alpha+n-1<0.
  \end{cases}
\end{equation}
And when $q>1$, one has
\begin{equation} \label{eq:18}
  F(A)\leq C
  \begin{cases}
    a^{p/q}, & \text{ if } p+\alpha+n-1>0, \\
    a^{p/q}\log a, & \text{ if } p+\alpha+n-1=0, \\
    a^{p/q-p-\alpha-n+1}, & \text{ if } p+\alpha+n-1<0.
  \end{cases}
\end{equation}
By the assumptions on $p$, $q$ and $\alpha$, we see that the power of $a$ in
each case of \eqref{eq:16}, \eqref{eq:17} and \eqref{eq:18} is negative. 
Since $a>3$, we have
\begin{equation*}
  \sup_{A\in\A_1}F(A)\leq C. 
\end{equation*}
For any sequence $\set{A_k}\subset\A_1$ with corresponding $r_k$ and
$a_k$, only one of the three cases
$r_k\to\infty$, $r_k\to0$, $a_k\to\infty$ can occur. And whenever it occurs, by \eqref{eq:11}, there must be
$a_k\to\infty$. Then $F(A_k)\to0$.

\textbf{Case $\A_2$.}
This case is simple. Since $1/3\leq a\leq3$, there exist positive constants
$C_n$ and $\tilde{C}_n$ such that for any $x\in\uS$, we have
\begin{equation*} 
    C_n r^{-1}\leq |A^{-1}x|\leq \tilde{C}_n r^{-1}.
\end{equation*}
Then
\begin{equation*}
  Cr^q \leq\int_{\uS} |A^{-1}x|^{-q} \leq \tilde{C} r^q,
\end{equation*}
which together with the definition of $\A$ implies that
\begin{equation*} 
  C\leq r\leq\tilde{C}.
\end{equation*}
Now, we see
\begin{equation*}
    C\leq |Ax|\leq \tilde{C}, \quad \forall x\in\uS.
\end{equation*}
Therefore,
\begin{equation*}
    F(A)\leq C \int_{\uS} |x'|^\alpha \,|x_n|^\beta \dd x = \tilde{C},
\end{equation*}
where the last equality is due to the fact that $\alpha>1-n$ and $\beta>-1$.
Obviously, for any sequence $\set{A_k}\subset\A_2$ with corresponding $r_k$ and
$a_k$, none of the four cases $r_k\to\infty$, $r_k\to0$, $a_k\to\infty$,
$a_k\to0$ can occur.

\textbf{Case $\A_3$.}
The discussion for this case is similar to that of $\A_1$.
We first claim that there exist two positive constants $C$ and $\tilde{C}$ such that for any
$A\in\A_3$, we have
\begin{equation} \label{eq:25}
  \begin{aligned} 
    Ca^{-\frac{1}{n}} &\leq r \leq \tilde{C}a^{-\frac{1}{n}},  && \text{ if } q<n-1, \\
    Ca^{-\frac{1}{n}}|\log a|^{-\frac{1}{n-1}} &\leq r
    \leq \tilde{C}a^{-\frac{1}{n}}|\log a|^{-\frac{1}{n-1}},  && \text{ if } q=n-1, \\
    Ca^{\frac{(n-1)(q-n)}{nq}} &\leq r \leq \tilde{C}a^{\frac{(n-1)(q-n)}{nq}},  && \text{ if } q>n-1.
  \end{aligned}
\end{equation}

In fact, from \eqref{eq:9} one has
\begin{equation} \label{eq:22}
    \int_{\uS} |A^{-1}x|^{-q}
    = 2r^q a^{\frac{q}{n}} \,\omega_{n-2} \int_0^{\frac{\pi}{2}}
    (\sin^2\theta+a^{2}\cos^2\theta)^{-\frac{q}{2}} \sin^{n-2}\theta \dd\theta.
\end{equation}
Since $a<1/3$, we have
\begin{equation} \label{eq:21}
  \begin{split}
    \int_{\uS} |A^{-1}x|^{-q}
    &\geq C r^q a^{\frac{q}{n}} \Bigl[  \int_0^{\frac{\pi}{3}} (\theta^2+a^2)^{-\frac{q}{2}} \theta^{n-2} \dd\theta 
    +\int_{\frac{\pi}{3}}^{\frac{\pi}{2}} \sin^{n-2}\theta \dd\theta \Bigr] \\
    &\geq C r^q a^{\frac{q}{n}} \Bigl[ \int_0^{\frac{\pi}{3}} (\theta^2+a^2)^{-\frac{q}{2}} \theta^{n-2} \dd\theta 
    +1 \Bigr].
  \end{split}
\end{equation}
Note that
\begin{equation*}
  \begin{split}
    \int_0^{\frac{\pi}{3}} (\theta^2+a^2)^{-\frac{q}{2}} \theta^{n-2} \dd\theta 
    &\geq \int_0^a 2^{-\frac{q}{2}} a^{-q} \theta^{n-2} \dd\theta 
    +\int_a^{\frac{\pi}{3}} 2^{-\frac{q}{2}} \theta^{-q} \theta^{n-2} \dd\theta \\
    &= \frac{C_q}{n-1} a^{-q+n-1} 
    +C_q \int_a^{\frac{\pi}{3}} \theta^{-q+n-2} \dd\theta \\
    &\geq C \begin{cases}
      1, & \text{ if } q<n-1, \\
      |\log a|, & \text{ if } q=n-1, \\
      a^{-q+n-1}, & \text{ if } q>n-1.
    \end{cases}
  \end{split}
\end{equation*}
Inserting it into \eqref{eq:21}, we obtain
\begin{equation} \label{eq:23}
  \int_{\uS} |A^{-1}x|^{-q}
  \geq C r^qa^{\frac{q}{n}}
  \begin{cases}
    1, & \text{ if } q<n-1, \\
    |\log a|, & \text{ if } q=n-1, \\
    a^{-q+n-1}, & \text{ if } q>n-1.
  \end{cases}
\end{equation}

On the other hand, from \eqref{eq:22}, we also have
\begin{equation*}
    \int_{\uS} |A^{-1}x|^{-q}
    \leq \tilde{C} r^q a^{\frac{q}{n}} \Bigl[  \int_0^{\frac{\pi}{3}} (\theta^2+a^2)^{-\frac{q}{2}} \theta^{n-2} \dd\theta 
    +\int_{\frac{\pi}{3}}^{\frac{\pi}{2}} \sin^{n-2}\theta \dd\theta \Bigr].
\end{equation*}
Similarly as in \eqref{eq:21}--\eqref{eq:23}, one can obtain
\begin{equation} \label{eq:24}
  \int_{\uS} |A^{-1}x|^{-q}
  \leq \tilde{C} r^qa^{\frac{q}{n}}
  \begin{cases}
    1, & \text{ if } q<n-1, \\
    |\log a|, & \text{ if } q=n-1, \\
    a^{-q+n-1}, & \text{ if } q>n-1.
  \end{cases}
\end{equation}

By combining \eqref{eq:23}, \eqref{eq:24} and the definition of $\A$ (see
\eqref{eq:5}), we have
\begin{equation*}
  \begin{aligned} 
    \tilde{C}^{-1}\Lambda^{-1} &\leq r^qa^{\frac{q}{n}} \leq C^{-1}\Lambda,  && \text{ if } q<n-1, \\
    \tilde{C}^{-1}\Lambda^{-1} &\leq r^{n-1}a^{(n-1)/n}|\log a| \leq C^{-1}\Lambda,  && \text{ if } q=n-1, \\
    \tilde{C}^{-1}\Lambda^{-1} &\leq r^qa^{(n-1)(n-q)/n} \leq C^{-1}\Lambda,  && \text{ if } q>n-1,
  \end{aligned}
\end{equation*}
which implies the claim \eqref{eq:25}.

Now, we can estimate $F(A)$ in \eqref{FA}.
From \eqref{eq:7}, we have
\begin{equation*}
    F(A)
    = 2r^p a^{\frac{p}{n}-p} \omega_{n-2} \int_0^{\frac{\pi}{2}} \sin^\alpha\!\theta \cos^\beta\!\theta
    \,(a^2\sin^2\theta+\cos^2\theta)^{\frac{p}{2}} \sin^{n-2}\theta \dd\theta.
\end{equation*}
Recalling that $a<1/3$, we can obtain
\begin{equation} \label{eq:19}
  \begin{split}
    F(A)
    &\leq C r^p a^{\frac{p}{n}-p} \Bigl[
    \int_0^{\frac{\pi}{6}} \theta^\alpha \theta^{n-2} \dd\theta 
    + \int_{\frac{\pi}{6}}^{\frac{\pi}{2}} (\tfrac{\pi}{2}-\theta)^\beta
    \bigl(a^2+(\tfrac{\pi}{2}-\theta)^2\bigr)^{\frac{p}{2}} \dd\theta \Bigr]\\
    &\leq C r^p a^{\frac{p}{n}-p} \Bigl[
    1
    + \int_0^{\frac{\pi}{3}} t^\beta \bigl(a^2+t^2\bigr)^{\frac{p}{2}} \dd t \Bigr].
  \end{split}
\end{equation}
And
\begin{equation*}
  \begin{split}
    \int_0^{\frac{\pi}{3}} t^\beta \bigl(a^2+t^2\bigr)^{\frac{p}{2}} \dd t
    &\leq \int_0^a t^\beta a^p \dd t +\int_a^{\frac{\pi}{3}} t^\beta t^p \dd t \\
    &= \frac{a^{p+\beta+1}}{\beta+1} +\int_a^{\frac{\pi}{3}} t^{\beta+p} \dd t \\
    &\leq C \begin{cases}
      1, & \text{ if } \beta+p+1>0, \\
      |\log a|, & \text{ if } \beta+p+1=0, \\
      a^{\beta+p+1}, & \text{ if } \beta+p+1<0.
    \end{cases}
  \end{split}
\end{equation*}
Inserting it into \eqref{eq:19}, we obtain
\begin{equation} \label{eq:20}
  F(A)\leq Cr^pa^{\frac{p}{n}-p}
  \begin{cases}
    1, & \text{ if } \beta+p+1>0, \\
    |\log a|, & \text{ if } \beta+p+1=0, \\
    a^{\beta+p+1}, & \text{ if } \beta+p+1<0.
  \end{cases}
\end{equation}

Recall that any $A\in\A_3$ must satisfy \eqref{eq:25}. The estimate given by
\eqref{eq:20} can be simplified as follows.
When $q<n-1$, there is
\begin{equation} \label{eq:26}
  F(A)\leq C
  \begin{cases}
    a^{-p}, & \text{ if } \beta+p+1>0, \\
    a^{-p}|\log a|, & \text{ if } \beta+p+1=0, \\
    a^{\beta+1}, & \text{ if } \beta+p+1<0.
  \end{cases}
\end{equation}
When $q=n-1$, there is
\begin{equation} \label{eq:27}
  F(A)\leq C
  \begin{cases}
    a^{-p}|\log a|^{-p/(n-1)}, & \text{ if } \beta+p+1>0, \\
    a^{-p}|\log a|^{(n-1-p)/(n-1)}, & \text{ if } \beta+p+1=0, \\
    a^{\beta+1}|\log a|^{-p/(n-1)}, & \text{ if } \beta+p+1<0.
  \end{cases}
\end{equation}
And when $q>n-1$, there is
\begin{equation} \label{eq:28}
  F(A)\leq C
  \begin{cases}
    a^{p(1-n)/q}, & \text{ if } \beta+p+1>0, \\
    a^{p(1-n)/q}|\log a|, & \text{ if } \beta+p+1=0, \\
    a^{p(1-n)/q+\beta+p+1}, & \text{ if } \beta+p+1<0.
  \end{cases}
\end{equation}

By the assumptions on $p$, $q$ and $\beta$, we see that the power of $a$ in
each case of \eqref{eq:26}, \eqref{eq:27} and \eqref{eq:28} is positive. Since
$a<1/3$, we have
\begin{equation*}
  \sup_{A\in\A_3}F(A)\leq C. 
\end{equation*}
For any sequence $\set{A_k}\subset\A_3$ with corresponding $r_k$ and
$a_k$, only one of the three cases
$r_k\to\infty$, $r_k\to0$, $a_k\to0$
can occur. And whenever it occurs, by \eqref{eq:25}, there must be
$a_k\to0$. Then $F(A_k)\to0$. 

Therefore, the proof of Lemma \ref{lem01} is completed. 
\end{proof}

\vskip5pt

\section{The non-uniqueness of solutions}\label{s4}

In this section, we prove Theorem \ref{thm1}.
Let $0<\epsilon<1/2$ and let
$M_\epsilon\in\mathrm{GL}(n)$ be given by
\begin{equation*}
  M_\epsilon =\DIAG(\epsilon, \cdots, \epsilon, 1) =
  \begin{pmatrix}
    \epsilon I & 0 \\
    0&1
  \end{pmatrix},
\end{equation*}
where $I$ is the unit $(n-1)\times(n-1)$ matrix.

For given indices $p, q$ such that $p<0<q$, we can choose appropriate indices $\alpha$ and $\beta$ such that they
are non-negative even integers satisfying the assumptions of Theorem \ref{thm2}. 
Let $\delta\in(0,-p)$ be a positive number (depending only on $p,q$, but independent of $\epsilon$) to be determined.

Now, consider the following equation
\begin{equation} \label{eq:1}
  \det(\nabla^2h+hI)(x) =|x'|^\alpha \,|x_n|^\beta \,|M_\epsilon x|^{-p-\delta-1-\beta},
  \quad x\in\uS.
\end{equation}
Note that this is an equation for the classical Minkowski problem. 
Since its right hand side is even
with respect to the origin and integrable on $\uS$, there exists a solution $h_\epsilon$, which is unique up to translation, \cite{CY}. 

By a translation, we may assume $h_\epsilon$ is the unique solution such that its associated convex body
$K_{h_\epsilon}$ centred at the origin.
Note that $K_{h_\epsilon}$ is rotationally symmetric and even.
The following lemma provides uniform bounds of $h_\epsilon$. 
The positive constants $C$, $\tilde{C}$ and $C_i$ in the following context depend only on $n$, $p$, $q$,
$\alpha$, $\beta$ and $\delta$, but independent of $\epsilon$.

\begin{lemma}\label{lem03}
  There exists a positive constant $C$, independent of $\epsilon\in(0,1/2)$, such
  that
  \begin{equation} \label{eq:2}
    C^{-1}\leq h_\epsilon \leq C \quad \text{ on }\uS.
  \end{equation}
\end{lemma}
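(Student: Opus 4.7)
The plan is to treat the upper and lower bounds on $h_\epsilon$ separately: the upper bound by a direct integral estimate exploiting the rotational symmetry of $K_\epsilon := K_{h_\epsilon}$, and the lower bound by a compactness argument that rules out degeneration of $K_\epsilon$ in the limit. Throughout, write $f_\epsilon(x) := |x'|^\alpha |x_n|^\beta |M_\epsilon x|^{-p-\delta-1-\beta}$ for the right-hand side of \eqref{eq:1}.

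First I would establish the uniform integral bounds $c \leq \int_\uS f_\epsilon \leq C$ and $\int_\uS |x'|\,f_\epsilon,\, \int_\uS |x_n|\,f_\epsilon \geq c' > 0$ for all $\epsilon \in (0, 1/2)$. Passing to spherical coordinates $x = (\sin\theta\cdot\omega, \cos\theta)$ and using the pointwise inequality $(\epsilon^2\sin^2\theta + \cos^2\theta)^{(-p-\delta-1-\beta)/2} \leq |\cos\theta|^{-p-\delta-1-\beta}$ (valid when the exponent is negative), the integrand is dominated by $\sin^{\alpha+n-2}\theta\,|\cos\theta|^{-p-\delta-1}$, which is integrable on $[0,\pi/2]$ precisely because $\alpha > 1-n$ and $\delta < -p$; positive lower bounds follow by restricting integration to $\theta \in [\pi/4,\pi/3]$. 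For the upper bound on $h_\epsilon$, equation \eqref{eq:1} reads $dS_{K_\epsilon} = f_\epsilon\,dx$, so $S(K_\epsilon) \leq C$ and the isoperimetric inequality gives $\VOL(K_\epsilon) \leq C$. Writing $r_\epsilon := h_\epsilon(e_1)$ and $s_\epsilon := h_\epsilon(e_n)$, rotational symmetry and evenness force $K_\epsilon$ to contain the segments $\pm r_\epsilon e_i$ for $i<n$ and $\pm s_\epsilon e_n$, which yields $h_\epsilon(x) \geq \max(r_\epsilon|x'|,\, s_\epsilon|x_n|)$. Substituting into the Minkowski identity $n\,\VOL(K_\epsilon) = \int_\uS h_\epsilon f_\epsilon$ and invoking the lower bounds above gives $r_\epsilon, s_\epsilon \leq C$; the same symmetry forces $K_\epsilon \subset \{|\xi'|\leq r_\epsilon\}\cap\{|\xi_n|\leq s_\epsilon\}$, so $h_\epsilon \leq \sqrt{r_\epsilon^2 + s_\epsilon^2} \leq C$ on $\uS$.

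For the lower bound I would argue by contradiction. If there exist $\epsilon_k \in (0, 1/2)$ and $y_k \in \uS$ with $h_{\epsilon_k}(y_k) \to 0$, the upper bound together with the Blaschke selection theorem produces a subsequence (not relabelled) for which $K_{\epsilon_k} \to K_*$ in Hausdorff metric and $y_k \to y_*$. Uniform convergence of support functions then gives $h_{K_*}(y_*) = h_{K_*}(-y_*) = 0$ (the second equality by evenness), so $K_* \subset \{\xi \cdot y_* = 0\}$ has empty interior in $\R^n$. On the other hand, the pointwise majorisation used in the first step and dominated convergence give $f_{\epsilon_k} \to f_0 := |x'|^\alpha |x_n|^{-p-\delta-1}$ in $L^1(\uS)$ with $\int_\uS f_0 > 0$. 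Weak continuity of Aleksandrov's surface area measure under Hausdorff convergence of convex bodies yields $dS_{K_*} = f_0\,dx$ as measures on $\uS$. But for a convex set contained in the hyperplane $\{\xi \cdot y_* = 0\}$, $S_{K_*}$ is supported on $\{\pm y_*\}$, contradicting the absolute continuity and positive total mass of $f_0\,dx$.

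The main obstacle is this last step, namely ruling out degeneration of $K_\epsilon$ to a flat body. It depends critically on the $L^1$-convergence of $f_\epsilon$ to an integrable limit $f_0$, and the choice $\delta \in (0, -p)$ is introduced precisely to guarantee $\int_\uS f_0 < \infty$; the weak continuity of the surface area measure then delivers the contradiction with the absolutely continuous right-hand side. The upper bound, by contrast, is a fairly routine consequence of the Minkowski integral identity, the isoperimetric inequality, and the fact that the rotational symmetry confines $K_\epsilon$ inside a fixed cylinder determined by $r_\epsilon$ and $s_\epsilon$.
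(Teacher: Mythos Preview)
Your upper bound is essentially the paper's argument: both bound $\int_{\uS} f_\epsilon$ uniformly, deduce a volume bound via the isoperimetric inequality, then feed the lower bound $h_\epsilon(x)\gtrsim r_\epsilon|x'|+s_\epsilon|x_n|$ into the Minkowski identity $n\,\VOL(K_\epsilon)=\int_{\uS}h_\epsilon f_\epsilon$ to control the diameter.  The only cosmetic difference is that the paper reads off $r_\epsilon,s_\epsilon$ as the semi-axes of the John ellipsoid of $K_\epsilon$, while you use $h_\epsilon(e_1),h_\epsilon(e_n)$ directly; both yield the same estimate.  (One small point: your majorant $(\epsilon^2\sin^2\theta+\cos^2\theta)^{(-p-\delta-1-\beta)/2}\le|\cos\theta|^{-p-\delta-1-\beta}$ is stated only for negative exponent; when $-p-\delta-1-\beta\ge0$ you should instead bound by $1$, as the paper does.)

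Your lower bound, however, takes a genuinely different route.  The paper argues \emph{quantitatively}: from $\VOL(K_\epsilon)\le\kappa_n R_{1\epsilon}^{n-1}R_{n\epsilon}\le C(\max h_\epsilon)^{n-1}\min h_\epsilon$ and the already-established $\max h_\epsilon\le C\,\VOL(K_\epsilon)$, it gets $1\le C(\max h_\epsilon)^{n-2}\min h_\epsilon$, hence $\min h_\epsilon\ge c$ directly from the upper bound.  Your compactness argument---pass to a Hausdorff limit $K_*$, observe it is contained in a hyperplane, and contradict the weak continuity of surface area measures against the absolutely continuous limit of $f_{\epsilon_k}$---is correct but softer.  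Two remarks: first, you tacitly assume $\epsilon_k\to0$ when you write $f_0=|x'|^\alpha|x_n|^{-p-\delta-1}$; you should first extract a subsequence with $\epsilon_k\to\epsilon_*\in[0,1/2]$, and the limit $f_{\epsilon_*}$ is still absolutely continuous with positive mass, so the contradiction persists.  Second, the paper's direct argument avoids any limiting procedure and gives an explicit constant, which is a bit cleaner; your approach would extend more readily to situations where no such algebraic relation is available.
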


\begin{proof}
  We first claim that the area of $\pd K_{h_\epsilon}$ is uniformly bounded from
  above. In fact, by \eqref{eq:1}, one has
    \begin{equation*}
    \begin{split}
      \AREA(\pd K_{h_\epsilon})
      &= \int_{\uS} \det(\nabla^2h_\epsilon+h_\epsilon I)(x) \dd x \\
      &= \int_{\uS} |x'|^\alpha \,|x_n|^\beta \sqrt{\epsilon^2|x'|^2+x_n^2}^{-p-\delta-1-\beta} \dd x \\
      &\leq \int_{\uS} \sqrt{\epsilon^2|x'|^2+x_n^2}^{-p-\delta-1} \dd x.
    \end{split}
  \end{equation*}
Since $0<\delta<-p$, we have
  \begin{equation*}
    \begin{split}
      \AREA(\pd K_{h_\epsilon})
      &\leq \begin{cases}
        \int_{\uS} |x_n|^{-p-\delta-1} \dd x & \text{when } 0<-p-\delta<1 \\[1.3ex]
        \int_{\uS} \dd x & \text{when } 1\leq -p-\delta
      \end{cases} \\
      &\leq C,
    \end{split}
  \end{equation*}
where $C$ is a positive constant depending on $n$, $p$, $\delta$, but independent
of $\epsilon$.
By the isoperimetric inequality, we also obtain
\begin{equation} \label{eq:3}
  \VOL(K_{h_\epsilon})\leq C_n\AREA(\pd K_{h_\epsilon})^{\frac{n}{n-1}} \leq C.
\end{equation}

Let $E_{\epsilon}$ be the minimum ellipsoid of
$K_{h_\epsilon}$. Then,
\begin{equation*}
\frac{1}{n} E_{\epsilon} \subset K_{h_\epsilon} \subset E_{\epsilon},
\end{equation*}
which implies that
\begin{equation} \label{eq:32}
\frac{1}{n} h_{E_\epsilon} \leq h_\epsilon \leq h_{E_\epsilon} \quad \text{on } \uS,
\end{equation}
where $h_{E_\epsilon}$ is the support function of $E_\epsilon$.
Since $K_{h_\epsilon}$ is rotationally symmetric and even, $E_{\epsilon}$ is
also rotationally symmetric and even. In particular, the centre of
$E_{\epsilon}$ is at the origin. Let $R_{1\epsilon}, \cdots, R_{n\epsilon}$ be
the lengths of the semi-axes of $E_\epsilon$ along the $x_1, \cdots, x_n$ axes.
Then $R_{1\epsilon}=\cdots=R_{n-1;\epsilon}$, and
\begin{equation}\label{eq:34}
h_{E_\epsilon}(x) =\sqrt{R_{1\epsilon}^2|x'|^2+R_{n\epsilon}^2x_n^2}, \qquad \forall\, x\in\uS.
\end{equation}

From the equation \eqref{eq:1}, one can see that
\begin{equation*}
  \begin{split}
    \VOL(K_{h_\epsilon})
    &= \frac{1}{n}\int_{\uS} h_\epsilon \det(\nabla^2h_\epsilon+h_\epsilon I) \\
    &= \frac{1}{n}\int_{\uS} h_\epsilon(x)
    |x'|^\alpha \,|x_n|^\beta \sqrt{\epsilon^2|x'|^2+x_n^2}^{-p-\delta-1-\beta} \dd x \\
    &\geq \frac{1}{n}\int_{\uS} h_\epsilon(x)
    |x'|^\alpha \,|x_n|^\beta \sqrt{\epsilon^2|x'|^2+x_n^2}^{-p-\delta-1} \dd x \\
    &\geq \frac{1}{n}
    \begin{cases}
      \int_{\uS} h_\epsilon(x)
      |x'|^\alpha \,|x_n|^\beta \dd x
      & \text{when } 0<-p-\delta<1, \\[1.3ex]
      \int_{\uS} h_\epsilon(x)
      |x'|^\alpha \,|x_n|^{\beta-p-\delta-1} \dd x
      & \text{when } 1\leq -p-\delta.
    \end{cases}
  \end{split}
\end{equation*}
From \eqref{eq:32} and \eqref{eq:34},  
\begin{equation*}
h_\epsilon(x) \geq \frac{1}{\sqrt{2}\,n} (R_{1\epsilon}|x'|+R_{n\epsilon}|x_n|), \qquad \forall\, x\in\uS.
\end{equation*}
Thus, we obtain
\begin{equation*}
  \begin{split}
    \VOL(K_{h_\epsilon})
    &\geq 
    \begin{cases}
      \displaystyle\int_{\uS} 
      \frac{R_{1\epsilon}|x'|+R_{n\epsilon}|x_n|}{\sqrt{2}n^2}
      |x'|^\alpha \,|x_n|^\beta \dd x
      & \text{when } 0<-p-\delta<1 \\[2.6ex]
      \displaystyle\int_{\uS} 
      \frac{R_{1\epsilon}|x'|+R_{n\epsilon}|x_n|}{\sqrt{2}n^2}
      |x'|^\alpha \,|x_n|^{\beta-p-\delta-1} \dd x
      & \text{when } 1\leq -p-\delta
    \end{cases} \\
    &=C_1R_{1\epsilon}+C_2R_{n\epsilon} \\
    &\geq C(R_{1\epsilon}+R_{n\epsilon}),
  \end{split}
\end{equation*}
where $C_1$, $C_2$ and $C$ are positive constants depending on $n$, $p$,
$\alpha$, $\beta$, $\delta$, but independent of $\epsilon$.
Therefore, again by \eqref{eq:32} we have
\begin{equation} \label{eq:35}
  \begin{split}
    \max h_\epsilon
    &\leq \max h_{E_\epsilon} \\
    &< R_{1\epsilon}+R_{n\epsilon} \\
    &\leq C \VOL(K_{h_\epsilon}) \\
    &\leq C,
  \end{split}
\end{equation}
where the last inequality is due to \eqref{eq:3}.
The second inequality of \eqref{eq:2} is proved.

It remains to prove the first inequality of \eqref{eq:2}.
For convenience, write
\begin{align*}
r_\epsilon &:=\min\set{R_{1\epsilon}, R_{n\epsilon}}, \\
R_\epsilon &:=\max\set{R_{1\epsilon}, R_{n\epsilon}}.
\end{align*}
Recalling \eqref{eq:32}, one has
\begin{align*}
r_\epsilon &\leq n \min h_\epsilon, \\
R_\epsilon &\leq n \max h_\epsilon.
\end{align*}
We can estimate $\VOL(K_{h_\epsilon})$ as follows:
\begin{equation*}
  \begin{split}
    \VOL(K_{h_\epsilon})
    &\leq \VOL(E_\epsilon) \\
    &= \kappa_n R_{1\epsilon}^{n-1}R_{n\epsilon} \\
    &\leq \kappa_n R_\epsilon^{n-2} R_{1\epsilon}R_{n\epsilon} \\ 
    &= \kappa_n R_\epsilon^{n-1} r_\epsilon \\ 
    &\leq C_n (\max h_\epsilon)^{n-1} \cdot \min h_\epsilon,
  \end{split}
\end{equation*}
where $\kappa_n$ is the volume of the unit ball in $\R^n$.
By the third inequality of \eqref{eq:35}, namely $\max h_\epsilon \leq
C\VOL(K_{h_\epsilon})$, the above inequality reads
\begin{equation*}
\max h_\epsilon \leq C (\max h_\epsilon)^{n-1} \cdot \min h_\epsilon,
\end{equation*}
namely
\begin{equation*}
1 \leq C (\max h_\epsilon)^{n-2} \cdot \min h_\epsilon.
\end{equation*}
Now the first inequality of \eqref{eq:2} follows from its second inequality.
The proof of this lemma is completed.
\end{proof}

Define
\begin{equation} \label{H_eps}
  H_\epsilon(x) := \epsilon^{\frac{q+\delta-1}{q-p}}
  |M_\epsilon^{-1} x| \cdot h_\epsilon\biggl( \frac{M_\epsilon^{-1} x}{|M_\epsilon^{-1} x|} \biggr),
  \quad x\in\uS.
\end{equation}

\begin{lemma} \label{lem02}
  The function $H_\epsilon$ satisfies the equation
  \begin{equation} \label{eq:29}
  H_\epsilon^{1-p} |\delbar H_\epsilon|^{q-n} \det(\nabla^2H_\epsilon+H_\epsilon I) = f_\epsilon \quad \text{ on } \ \uS,
  \end{equation}
  where
  \begin{equation} \label{eq:30}
    f_\epsilon(x) :=
    h_\epsilon(x_\epsilon)^{1-p}
    |x'|^\alpha \,|x_n|^\beta \,|N_\epsilon x|^{\delta-\alpha-n+1} 
    |N_\epsilon(\delbar h_\epsilon)(x_\epsilon)|^{q-n},
  \end{equation}
  and for simplicity, we also denote
  \begin{equation} \label{eq:31}
    x_\epsilon = \frac{M_\epsilon^{-1}x}{|M_\epsilon^{-1}x|}\quad
    \text{ and }\quad
    N_\epsilon
    =\epsilon M_\epsilon^{-1}
    =
  \begin{pmatrix}
    I & 0 \\
    0&\epsilon
  \end{pmatrix}.
\end{equation}
\end{lemma}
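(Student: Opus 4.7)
The plan is to view $H_\epsilon$ as a rescaled pull-back of $h_\epsilon$ under the linear map $M_\epsilon^{-1}$, so that \eqref{eq:29} reduces to the classical Minkowski equation \eqref{eq:1}. Writing $\lambda := \epsilon^{(q+\delta-1)/(q-p)}$ and $\tilde h_\epsilon(x) := |M_\epsilon^{-1}x|\,h_\epsilon(x_\epsilon)$, I observe that $\tilde h_\epsilon$ is simply the $1$-homogeneous extension of $h_\epsilon$ evaluated at $M_\epsilon^{-1}x$, hence is the support function of the convex body $M_\epsilon^{-1}K_{h_\epsilon}$; the definition \eqref{H_eps} then reads $H_\epsilon = \lambda\tilde h_\epsilon$.

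The second step is to record and apply the standard transformation rules for support functions under $T\in\mathrm{GL}(n)$:
\[
h_{TK}(x) = h_K(T^Tx), \qquad \delbar h_{TK}(x) = T\,\delbar h_K(u),
\]
\[
\det(\nabla^2 h_{TK}+h_{TK}I)(x) \;=\; \frac{(\det T)^2}{|T^Tx|^{n+1}}\,\det(\nabla^2 h_K+h_KI)(u),\qquad u := \frac{T^Tx}{|T^Tx|}.
\]
The first is immediate; the second comes from $\delbar h_K$ being the inverse Gauss map and the transformation law $\nu_{TK}(Ty) = T^{-T}\nu_K(y)/|T^{-T}\nu_K(y)|$; the third can be quickly verified by equating $\mathrm{vol}(TK) = |\det T|\,\mathrm{vol}(K)$ with the cone-volume identity $n\,\mathrm{vol}(K) = \int_{\uS} h_K\,dS_K$ and applying the change of variables $u = T^Tx/|T^Tx|$ on $\uS$ whose Jacobian is $|\det T|\,|T^Tx|^{-n}$. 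Specialising to $T = M_\epsilon^{-1}$ gives $T^Tx = M_\epsilon^{-1}x = \epsilon^{-1}N_\epsilon x$, $u = x_\epsilon$, and $\det T = \epsilon^{-(n-1)}$.

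The last step is substitution and exponent book-keeping. I would plug \eqref{eq:1} into the transformed determinant identity and convert each quantity indexed at $x_\epsilon$ to one at $x$ via the elementary relations $|M_\epsilon x_\epsilon| = \epsilon/|N_\epsilon x|$, $|x_\epsilon'| = |x'|/|N_\epsilon x|$, and $|(x_\epsilon)_n| = \epsilon|x_n|/|N_\epsilon x|$. By homogeneity of each of the three factors in the Monge--Amp\`ere operator,
\[
H_\epsilon^{1-p}\,|\delbar H_\epsilon|^{q-n}\,\det(\nabla^2 H_\epsilon+H_\epsilon I) = \lambda^{q-p}\,\tilde h_\epsilon^{1-p}\,|\delbar\tilde h_\epsilon|^{q-n}\,\det(\nabla^2\tilde h_\epsilon+\tilde h_\epsilon I).
\]
Tracking the powers of $\epsilon$ generated on the right-hand side yields a net exponent of $1-q-\delta$, which is precisely cancelled by $\lambda^{q-p} = \epsilon^{q+\delta-1}$, while the surviving powers of $|N_\epsilon x|$ coalesce into the prescribed exponent $\delta-\alpha-n+1$; this delivers \eqref{eq:30} exactly. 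The only real obstacle is keeping these exponents sign-correct; the calibration $\lambda = \epsilon^{(q+\delta-1)/(q-p)}$ chosen in \eqref{H_eps} is engineered precisely so that the net power of $\epsilon$ vanishes.
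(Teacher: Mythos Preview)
Your proposal is correct and follows essentially the same route as the paper: define the intermediate pull-back $\tilde h_\epsilon = u_\epsilon$, invoke the linear-transformation law for $\det(\nabla^2 h + hI)$ (which the paper cites from \cite{CW.Adv.205-2006.33,LW.JDE.254-2013.983}), substitute \eqref{eq:1}, and then scale by $\lambda^{q-p}=\epsilon^{q+\delta-1}$ to cancel the residual $\epsilon$-power. The only quibble is that your volume-comparison sketch for the determinant transformation yields an integral identity rather than a pointwise one, so in a write-up you should simply cite the formula as the paper does.
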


\begin{proof}
Let
\begin{equation} \label{u_eps}
u_\epsilon(x) :=  |M_\epsilon^{-1} x| \cdot h_\epsilon\biggl( \frac{M_\epsilon^{-1} x}{|M_\epsilon^{-1} x|} \biggr).
\end{equation}
By the invariance of the quantity
$h_\epsilon^{n+1}\det(\nabla^2h_\epsilon+h_\epsilon I)$ under linear
transformations, see Proposition 7.1 in \cite{CW.Adv.205-2006.33}
or formula (2.12) in \cite{LW.JDE.254-2013.983}, we have
\begin{equation} \label{eq:33}
    \det(\nabla^2u_\epsilon+u_\epsilon I)(x)
    = \det(\nabla^2h_\epsilon+h_\epsilon I)\biggl( \frac{M_\epsilon^{-1} x}{|M_\epsilon^{-1} x|} \biggr)
    \cdot \frac{(\det M_\epsilon^{-1})^2}{|M_\epsilon^{-1}x|^{n+1}}.
\end{equation}

Observe that
\begin{equation*}
  x_\epsilon
  = \frac{M_\epsilon^{-1}x}{|M_\epsilon^{-1}x|} 
  = \frac{(\epsilon^{-1}x',x_n)}{|M_\epsilon^{-1}x|} 
  = \frac{(x',\epsilon x_n)}{|N_\epsilon x|}.
\end{equation*}
By virtue of \eqref{eq:1}, we then have
\begin{equation*}
  \begin{split}
    \det(\nabla^2h_\epsilon+h_\epsilon I)\biggl( \frac{M_\epsilon^{-1} x}{|M_\epsilon^{-1} x|} \biggr)
    &= 
    \frac{|x'|^\alpha}{|N_\epsilon x|^\alpha} \cdot
    \frac{|\epsilon x_n|^\beta}{|N_\epsilon x|^\beta} \cdot
    \biggl( \frac{1}{|M_\epsilon^{-1} x|} \biggr)^{-p-\delta-1-\beta} \\
    &= 
    \frac{|x'|^\alpha}{|N_\epsilon x|^\alpha} \cdot
    \frac{|\epsilon x_n|^\beta}{|N_\epsilon x|^\beta} \cdot
    \biggl( \frac{\epsilon}{|N_\epsilon x|} \biggr)^{-p-\delta-1-\beta} \\
    &= \epsilon^{-p-\delta-1}
    |x'|^\alpha \,|x_n|^\beta \,|N_\epsilon x|^{p+\delta+1-\alpha}.
  \end{split}
\end{equation*}

Inserting it into \eqref{eq:33}, we obtain
\begin{equation} \label{eq:36}
  \begin{split}
    \det(\nabla^2u_\epsilon+u_\epsilon I)(x)
    &= \epsilon^{-p-\delta-1}
    |x'|^\alpha \,|x_n|^\beta \,|N_\epsilon x|^{p+\delta+1-\alpha}
    \cdot \frac{(\epsilon^{1-n})^2 \,\epsilon^{n+1}}{|N_\epsilon x|^{n+1}} \\
    &= \epsilon^{-p-\delta+2-n}
    |x'|^\alpha \,|x_n|^\beta \,|N_\epsilon x|^{p+\delta-\alpha-n}.
  \end{split}
\end{equation}

By the definition of $u_\epsilon$ in \eqref{u_eps}, one can see that
\begin{gather*}
  u_\epsilon(x)
  =\epsilon^{-1} |N_\epsilon x| \cdot h_\epsilon(x_\epsilon), \\
  (\delbar u_\epsilon)(x)
  =M_\epsilon^{-T}(\delbar h_\epsilon)(x_\epsilon) 
  =\epsilon^{-1} N_\epsilon(\delbar h_\epsilon)(x_\epsilon).
\end{gather*}
Hence, from \eqref{eq:36} we have
\begin{equation*} 
  \begin{split}
    u_\epsilon^{1-p} |\delbar u_\epsilon|^{q-n}
    & \det(\nabla^2u_\epsilon+u_\epsilon I)(x) \\
    &=
    \epsilon^{p-1} |N_\epsilon x|^{1-p} h_\epsilon(x_\epsilon)^{1-p}\cdot 
    \epsilon^{n-q} |N_\epsilon(\delbar h_\epsilon)(x_\epsilon)|^{q-n} \cdot\\
    &\hskip1.3em
    \epsilon^{-p-\delta+2-n}
    |x'|^\alpha \,|x_n|^\beta \,|N_\epsilon x|^{p+\delta-\alpha-n} \\
    &=
    \epsilon^{-q-\delta+1} h_\epsilon(x_\epsilon)^{1-p}
    |x'|^\alpha \,|x_n|^\beta \,|N_\epsilon x|^{\delta-\alpha-n+1} 
    |N_\epsilon(\delbar h_\epsilon)(x_\epsilon)|^{q-n}.
  \end{split}
\end{equation*}
Recalling \eqref{H_eps}, namely $H_\epsilon = \epsilon^{\frac{q+\delta-1}{q-p}} u_\epsilon$, we thus obtain
\begin{multline*}
  H_\epsilon^{1-p} |\delbar H_\epsilon|^{q-n} \det(\nabla^2H_\epsilon+H_\epsilon I)(x) \\
  =h_\epsilon(x_\epsilon)^{1-p}
  |x'|^\alpha \,|x_n|^\beta \,|N_\epsilon x|^{\delta-\alpha-n+1} 
  |N_\epsilon(\delbar h_\epsilon)(x_\epsilon)|^{q-n},
\end{multline*}
that is equation \eqref{eq:29}. The proof is done.
\end{proof}

From the definition of $H_\epsilon$ in \eqref{H_eps}, one can see that
\begin{equation*}
  \rho_{H_\epsilon}(u)
  = \epsilon^{\frac{q+\delta-1}{q-p}} 
  |M_\epsilon u|^{-1} \cdot \rho_{h_\epsilon}\biggl( \frac{M_\epsilon u}{|M_\epsilon u|} \biggr),
  \quad \forall\, u\in\uS.
\end{equation*}
By Lemma \ref{lem03}, one has
\begin{equation}\label{newrrr}
  C^{-1}\leq \rho_{h_\epsilon}\leq C\quad \text{ on }\ \uS,
\end{equation}
where $C>0$ is independent of $\epsilon$.
Therefore, we have
\begin{equation}\label{newrho}
  \rho_{H_\epsilon}(u)
  \leq C\, \epsilon^{\frac{q+\delta-1}{q-p}}
  |M_\epsilon u|^{-1}
  \quad \forall\, u\in\uS.
\end{equation}

\begin{corollary}
We have the estimate for the $q$-th dual volume of $K_{H_\epsilon}$ as follows: 
\begin{equation} \label{eq:38}
  \widetilde{V}_q(K_{H_\epsilon})
  \leq C
  \begin{cases}
    \epsilon^{\frac{q(q+\delta-1)}{q-p}}, & \text{ if } q<1, \\
    \epsilon^{\frac{\delta}{1-p}}|\log\epsilon|, & \text{ if } q=1, \\
    \epsilon^{\frac{q\delta+p(q-1)}{q-p}}, & \text{ if } q>1.
  \end{cases}
\end{equation}
\end{corollary}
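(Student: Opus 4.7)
The plan is to start from the pointwise bound \eqref{newrho} on the radial function of $K_{H_\epsilon}$ and compute the resulting integral directly. Since
\begin{equation*}
\widetilde{V}_q(K_{H_\epsilon})=\frac{1}{n}\int_{\uS}\rho_{H_\epsilon}(u)^q\dd u,
\end{equation*}
the estimate \eqref{newrho} gives immediately
\begin{equation*}
\widetilde{V}_q(K_{H_\epsilon})\leq C\,\epsilon^{\frac{q(q+\delta-1)}{q-p}}\int_{\uS} |M_\epsilon u|^{-q}\dd u.
\end{equation*}
Noting that $|M_\epsilon u|^2=\epsilon^2|u'|^2+u_n^2$, everything reduces to estimating the scalar integral
\begin{equation*}
I(\epsilon):=\int_{\uS}\bigl(\epsilon^2|u'|^2+u_n^2\bigr)^{-q/2}\dd u.
\end{equation*}

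Next, I would switch to spherical coordinates with $u_n=\cos\theta$, $|u'|=\sin\theta$, so that up to the factor $\omega_{n-2}$ and the symmetry $\theta\mapsto\pi-\theta$,
\begin{equation*}
I(\epsilon)\leq \tilde{C}\int_0^{\pi/2}\bigl(\epsilon^2\sin^2\theta+\cos^2\theta\bigr)^{-q/2}\sin^{n-2}\theta\dd\theta.
\end{equation*}
I would split this at $\theta=\pi/4$. On $[0,\pi/4]$ the integrand is bounded uniformly in $\epsilon$, so that piece contributes $O(1)$. On $[\pi/4,\pi/2]$ I substitute $t=\pi/2-\theta$ (so that $\sin\theta\approx 1$ and $\cos\theta\approx t$) to reduce the estimate to
\begin{equation*}
\int_0^{\pi/4}\bigl(\epsilon^2+t^2\bigr)^{-q/2}\dd t.
\end{equation*}
This is the standard $\epsilon$-regularised singular integral, which I would bound by further splitting at $t=\epsilon$: on $[0,\epsilon]$ the integrand is $\leq \epsilon^{-q}$, contributing $\epsilon^{1-q}$; on $[\epsilon,\pi/4]$ the integrand is $\leq t^{-q}$.

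Finally, I would compute the three cases from the resulting $\int_\epsilon^{\pi/4}t^{-q}\dd t$. For $q<1$ the integral is $O(1)$ and the $\epsilon^{1-q}$ contribution is also bounded, so $I(\epsilon)=O(1)$, yielding the first case of \eqref{eq:38}. For $q=1$ one gets $I(\epsilon)=O(|\log\epsilon|)$, and multiplying by the prefactor $\epsilon^{(q+\delta-1)/(q-p)}=\epsilon^{\delta/(1-p)}$ (using $q=1$) gives the middle case. For $q>1$ both contributions are $O(\epsilon^{1-q})$, and combining with the prefactor one simplifies
\begin{equation*}
\frac{q(q+\delta-1)}{q-p}+(1-q)=\frac{q\delta+p(q-1)}{q-p},
\end{equation*}
which produces the third case of \eqref{eq:38}. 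There is no real conceptual obstacle here; the main care needed is in the bookkeeping of the exponents in the $q>1$ case and in the rigorous justification of the spherical-coordinate change (done exactly as in the proof of Lemma \ref{lem01}, compare with \eqref{eq:9} and \eqref{eq:22}).
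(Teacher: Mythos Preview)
Your proposal is correct and follows essentially the same route as the paper: start from \eqref{newrho}, reduce to estimating $\int_{\uS}|M_\epsilon u|^{-q}\dd u$ in spherical coordinates, split near $\theta=\pi/2$, and handle the three cases $q<1$, $q=1$, $q>1$ of the resulting one-dimensional integral. The only cosmetic difference is that the paper first bounds $(\epsilon^2\sin^2\theta+\cos^2\theta)^{-q/2}$ by $C(\epsilon\sin\theta+\cos\theta)^{-q}$ before integrating, whereas you split the $t$-integral at $t=\epsilon$ directly; both lead to the same case analysis and the same exponents.
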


\begin{proof}
From \eqref{newrho} and the definition \eqref{qdvol},
\begin{equation}\label{eq:37}
  \begin{split}
    \widetilde{V}_q(K_{H_\epsilon})
    &= \frac{1}{n} \int_{\uS} \rho_{H_\epsilon}^q(u) \dd u \\
    &\leq C\, \epsilon^{\frac{q(q+\delta-1)}{q-p}} \int_{\uS} |M_\epsilon u|^{-q} \dd u.
  \end{split}
\end{equation}
Note that
\begin{equation*}
  \begin{split}
    \int_{\uS} |M_\epsilon u|^{-q} \dd u
    &= \int_{\uS} (\epsilon^2|u'|^2+u_n^2)^{-\frac{q}{2}} \dd u \\
    &=  2\omega_{n-2} \int_0^{\frac{\pi}{2}}
    (\epsilon^2\sin^2\theta+\cos^2\theta)^{-\frac{q}{2}} \sin^{n-2}\theta \dd\theta \\
    &\leq C  \int_0^{\frac{\pi}{2}}
    (\epsilon\sin\theta+\cos\theta)^{-q} \dd\theta,
  \end{split}
\end{equation*}
where $C$ is a positive constant independent of $\epsilon\in(0,1/2)$.
Since
\begin{equation*}
  \begin{split}
    \int_0^{\frac{\pi}{2}}
    (\epsilon\sin\theta+\cos\theta)^{-q} \dd\theta
    &\leq \int_0^{\frac{\pi}{4}} (\cos\theta)^{-q} \dd\theta 
    +C_q \int_{\frac{\pi}{4}}^{\frac{\pi}{2}} (\epsilon+\tfrac{\pi}{2}-\theta)^{-q} \dd\theta \\
    &= \tilde{C}_q 
    +C_q \int_0^{\frac{\pi}{4}} (\epsilon+t)^{-q} \dd t \\
    &\leq C_q
    \begin{cases}
      1, & \text{ if } q<1, \\
      |\log\epsilon|, & \text{ if } q=1, \\
      \epsilon^{1-q}, & \text{ if } q>1,
    \end{cases}
  \end{split}
\end{equation*}
we obtain that
\begin{equation*}
    \int_{\uS} |M_\epsilon u|^{-q} \dd u
\leq C
    \begin{cases}
      1, & \text{ if } q<1, \\
      |\log\epsilon|, & \text{ if } q=1, \\
      \epsilon^{1-q}, & \text{ if } q>1.
    \end{cases}
\end{equation*}
Therefore, the estimate \eqref{eq:38} follows. 
\end{proof}

On the other hand, thanks to \eqref{eq:2} and \eqref{newrrr}, the function $f_\epsilon$ in \eqref{eq:30} satisfies the assumption of Theorem \ref{thm2}.
By applying Theorem \ref{thm2} to equation \eqref{eq:29}, we can obtain another solution, say $\widetilde{H}_\epsilon$.
In order to show $\widetilde{H}_\epsilon$ is different to $H_\epsilon$, we shall show that they have different $q$-th dual volumes. 
By the estimate \eqref{var-est}, the $q$-th dual volume $\widetilde{V}_q(K_{\widetilde{H}_\epsilon})$ is bounded below by $\norm{f_\epsilon}_{L^1(\uS)}$.
In the following we shall estimate $\norm{f_\epsilon}_{L^1(\uS)}$.

For simplicity, we write
\begin{equation}\label{eq:43}
  \delbar h_\epsilon =(\xi_\epsilon',\xi_{\epsilon n}), 
\end{equation}
where $\xi_\epsilon'$ denotes the first $n-1$ coordinates of
$\delbar h_\epsilon$, and $\xi_{\epsilon n}$ the last one.
Let
\begin{equation}\label{eq:41}
  S_0 :=\set{x\in \uS : |x_n|<\cos\Bigl( \frac{\pi}{4}+\frac{1}{2}\arccos C^{-2} \Bigr)},
\end{equation}
where $C$ is the constant in Lemma \ref{lem03}.

\begin{lemma} \label{lem05}
  For any $\epsilon\in(0,1/2)$, one has
  \begin{equation} \label{eq:39}
    |\xi_\epsilon'(x)|> \frac{1}{2}C^{-1}, \quad \forall\, x\in S_0,
  \end{equation}
where $C$ is the same constant as in Lemma \ref{lem03} and \eqref{eq:41}.
\end{lemma}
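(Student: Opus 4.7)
My plan is to reduce the lemma to a two-dimensional scalar inequality via the symmetries of $K_{h_\epsilon}$, and then derive it from a quadratic in $s:=|\xi'_\epsilon(x)|$.

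First, $K_{h_\epsilon}$ is rotationally symmetric about the $x_n$-axis and even, hence symmetric across every coordinate hyperplane. This forces $\xi'_\epsilon(x)$ to be parallel to $x'$, so one may write $\xi'_\epsilon(x)=s\,x'/|x'|$ for some $s\in\R$. Reflection across a hyperplane $\{y_i=0\}$ for some $i<n$ (which preserves $K_{h_\epsilon}$) forces $s\geq 0$, and reflection across $\{y_n=0\}$ forces $\xi_{\epsilon n}(x)\,x_n\geq 0$. Thus $|\xi'_\epsilon(x)|=s$, and it suffices to show $s>\tfrac12 C^{-1}$.

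Second, I would combine the Euler-type identity $h_\epsilon(x)=\xi'_\epsilon(x)\cdot x'+\xi_{\epsilon n}x_n=s|x'|+\xi_{\epsilon n}x_n$ (together with the sign condition $\xi_{\epsilon n}x_n\geq 0$), the lower bound $h_\epsilon\geq C^{-1}$ from Lemma \ref{lem03}, and the upper bound $|\delbar h_\epsilon(x)|^2=s^2+\xi_{\epsilon n}^2\leq C^2$ (which follows from $h_\epsilon\leq C$, so that $K_{h_\epsilon}\subset\{|y|\leq C\}$). These imply the scalar inequality
\begin{equation*}
  s|x'|+|x_n|\sqrt{C^2-s^2}\geq C^{-1}.
\end{equation*}
If $s|x'|\geq C^{-1}$ then $s\geq C^{-1}>\tfrac12 C^{-1}$ and we are done, so we may assume $s|x'|<C^{-1}$, square, and use $|x'|^2+x_n^2=1$ to obtain the quadratic inequality $s^2-2|x'|C^{-1}s+(C^{-2}-C^2 x_n^2)\leq 0$. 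Its smaller root yields the explicit lower bound
\begin{equation*}
  s\geq |x'|C^{-1}-\frac{|x_n|\sqrt{C^4-1}}{C}.
\end{equation*}

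Finally, I need to show that this lower bound strictly exceeds $\tfrac12 C^{-1}$ whenever $|x_n|<\cos\theta$; after multiplying by $C$ this is just $\sqrt{1-x_n^2}-\tfrac12>|x_n|\sqrt{C^4-1}$. Both sides are monotone in $|x_n|$ (decreasing and increasing, respectively), so the critical case is $|x_n|=\cos\theta$, where $|x'|=\sin\theta$. Using the defining identity $\sin\theta\cos\theta=C^{-2}/2$ (equivalent to $\sin(2\theta)=C^{-2}$), the critical inequality simplifies, through direct algebra, to $\sin\theta\leq 1$, which holds strictly for every finite $C$. This strict inequality then propagates to all $|x_n|<\cos\theta$ by the monotonicity above. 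The main technical obstacle is recognising the algebraic reduction of the critical case to $\sin\theta\leq 1$; once that reduction is carried out, every other step is either a symmetry argument or an elementary inequality.
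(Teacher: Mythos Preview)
Your argument is correct, and it reaches the same scalar inequality
\[
  s|x'|+|x_n|\sqrt{C^2-s^2}\;\geq\;C^{-1}
\]
that the paper uses, but you handle both its derivation and its resolution differently.

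For the derivation, you invoke the rotational and reflection symmetries of $K_{h_\epsilon}$ to force $\xi'_\epsilon\parallel x'$ with $s\ge0$ and $\xi_{\epsilon n}x_n\ge0$, then plug into the Euler identity $h_\epsilon=\bar\nabla h_\epsilon\cdot x$. The paper instead just uses Cauchy--Schwarz on $\langle\bar\nabla h_\epsilon,x\rangle\ge C^{-1}$; this is shorter and does not depend on the special symmetry of $K_{h_\epsilon}$, so it is worth knowing.

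For the resolution, you treat the inequality as a quadratic in $s$ and check that the smaller root exceeds $\tfrac12C^{-1}$ at the boundary $|x_n|=\cos\theta_0$. The paper substitutes $s=C\sin\phi$, which collapses the inequality to $\cos(\phi-\theta)\ge C^{-2}$, i.e.\ $\phi\ge\theta-\arccos C^{-2}$; the conclusion then follows in one line from $\sin(\tfrac12 t)>\tfrac12\sin t$. Your endpoint verification is genuinely equivalent: writing $\alpha=\arccos C^{-2}$ and using $\sqrt{C^4-1}=\tan\alpha$, your critical inequality $\sin\theta_0-\tfrac12\ge\cos\theta_0\sqrt{C^4-1}$ becomes $\sin(\theta_0-\alpha)\ge\tfrac12\cos\alpha$, which (with $\theta_0=\tfrac\pi4+\tfrac\alpha2$) indeed reduces to $\sin\alpha\le1$. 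Note that what collapses to ``$\le 1$'' is $\sin\alpha$, not $\sin\theta_0$; your write-up conflates the two symbols. The paper's trigonometric substitution is the cleaner bookkeeping for exactly this reason, but your quadratic route is fine once the algebra is done carefully.
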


\begin{proof}
  By Lemma \ref{lem03}, one has
  \begin{equation*}
B_{C^{-1}}\subset K_{h_\epsilon} \subset B_C,
\end{equation*}
where $B_r$ denotes the ball in $\R^n$ centred at the origin with radius $r$.

For a fixed $x\in S_0$, $(\delbar h_\epsilon)(x)$ is the boundary point of
$K_{h_\epsilon}$, whose unit outer normal vector is $x$. Thus
\begin{equation*}
  \left\langle (\delbar h_\epsilon)(x), x \right\rangle \geq C^{-1}
  \quad\text{ and }\quad
  |(\delbar h_\epsilon)(x)|\leq C,
\end{equation*}
where $\langle \cdot, \cdot \rangle$ is the inner product of $\R^n$.
We shall show that the estimate \eqref{eq:39} follows.

In fact, using $\delbar h_\epsilon =(\xi_\epsilon',\xi_{\epsilon n})$ and
$x=(x',x_n)$, we have
\begin{equation*}
  \begin{split}
    C^{-1}
    &\leq x'\cdot\xi_\epsilon'+x_n\cdot\xi_{\epsilon n} \\
    &\leq |x'|\cdot|\xi_\epsilon'|+|x_n|\cdot|\xi_{\epsilon n}| \\
    &\leq |x'|\cdot|\xi_\epsilon'|+|x_n|\cdot\sqrt{C^2-|\xi_\epsilon'|^2}.
  \end{split}
\end{equation*}
Write $x_n=\cos\theta$, where $\theta\in[0,\pi]$, and
\begin{equation} \label{eq:40}
 |\xi_\epsilon'| =C \sin\phi, \quad \phi\in[0,\tfrac{\pi}{2}].
\end{equation}
Then
\begin{equation*}
  \begin{split}
    C^{-1}
    &\leq C\sin\theta\sin\phi+C\cos\theta\cos\phi \\
    &=C\cos(\phi-\theta),
  \end{split}
\end{equation*}
which implies that
\begin{equation*}
|\phi-\theta|\leq\arccos C^{-2}.
\end{equation*}
Therefore, we have
\begin{equation}\label{eq:42}
\phi\geq\theta-\arccos C^{-2}.
\end{equation}
Recalling the definition of $S_0$ in \eqref{eq:41}, one has
\begin{equation*}
  \cos\theta <\cos\Bigl( \frac{\pi}{4}+\frac{1}{2}\arccos C^{-2} \Bigr),
\end{equation*}
which implies
\begin{equation*}
  \theta > \frac{\pi}{4}+\frac{1}{2}\arccos C^{-2}.
\end{equation*}
Hence, we obtain from \eqref{eq:42} that
\begin{equation*}
\phi>\frac{\pi}{4}-\frac{1}{2}\arccos C^{-2}.
\end{equation*}
By the monotonicity and concavity of $\sin$ on $[0,\tfrac{\pi}{2}]$, one has
\begin{equation*}
  \begin{split}
    \sin\phi
    &>\sin\Bigl( \frac{\pi}{4}-\frac{1}{2}\arccos C^{-2} \Bigr) \\
    &>\frac{1}{2} \sin\Bigl( \frac{\pi}{2}-\arccos C^{-2} \Bigr) \\
    &=\frac{1}{2} C^{-2},
  \end{split}
\end{equation*}
which together with \eqref{eq:40} implies that
$|\xi_\epsilon'|>\frac{1}{2}C^{-1}$. The proof is finished. 
\end{proof}

Using Lemma \ref{lem05}, we can show that $\norm{f_\epsilon}_{L^1(\uS)}$ has a
uniform positive lower bound for all $\epsilon\in(0,1/2)$.

\begin{lemma}\label{lem04}
  For $f_\epsilon$ given in \eqref{eq:30}, we have
  \begin{equation*}
\norm{f_\epsilon}_{L^1(\uS)} \geq C,
  \end{equation*}
 where the constant $C>0$ is independent of $\epsilon$. 
\end{lemma}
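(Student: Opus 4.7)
The plan is to exhibit a set $E\subset \uS$ of positive Lebesgue measure, independent of $\epsilon$, on which every factor in the expression \eqref{eq:30} for $f_\epsilon$ is uniformly bounded below by a positive constant. I would take $E:=\{x\in\uS:|x'|>1/2\}$ and handle the easy factors first: by Lemma \ref{lem03} and $p<0$, $h_\epsilon(x_\epsilon)^{1-p}\geq C^{-(1-p)}$; on $E$, $|N_\epsilon x|^2=|x'|^2+\epsilon^2x_n^2\in[1/4,5/4]$ for $\epsilon\in(0,1/2)$, so both $|x'|^\alpha$ and $|N_\epsilon x|^{\delta-\alpha-n+1}$ are uniformly bounded below by positive constants (recall $\alpha,\beta\geq 0$).

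The delicate factor is $|N_\epsilon(\delbar h_\epsilon)(x_\epsilon)|^{q-n}$. The upper bound $|\delbar h_\epsilon|\leq C$ (which follows from $K_{h_\epsilon}\subset B_C$ by Lemma \ref{lem03}) gives $|N_\epsilon(\delbar h_\epsilon)(x_\epsilon)|\leq C$ for free. For the matching lower bound I would invoke Lemma \ref{lem05}, which requires $x_\epsilon\in S_0=\{y\in\uS:|y_n|<c\}$ with $c:=\cos(\pi/4+\tfrac{1}{2}\arccos C^{-2})$. A direct computation gives $|(x_\epsilon)_n|=|x_n|/|M_\epsilon^{-1}x|$, and on $E$ we have $|M_\epsilon^{-1}x|\geq\epsilon^{-1}|x'|\geq 1/(2\epsilon)$, whence $|(x_\epsilon)_n|\leq 2\epsilon$. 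Thus for $\epsilon<c/2$ we have $x_\epsilon\in S_0$, and Lemma \ref{lem05} yields $|\xi'_\epsilon(x_\epsilon)|>\tfrac{1}{2}C^{-1}$; combined with $|N_\epsilon(\delbar h_\epsilon)(x_\epsilon)|\geq|\xi'_\epsilon(x_\epsilon)|$, this gives the two-sided control
\[
\tfrac{1}{2}C^{-1}\leq |N_\epsilon(\delbar h_\epsilon)(x_\epsilon)|\leq C,
\]
so $|N_\epsilon(\delbar h_\epsilon)(x_\epsilon)|^{q-n}$ is bounded below by a positive constant regardless of the sign of $q-n$.

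Combining, for $\epsilon\in(0,c/2)$ and $x\in E$ one obtains $f_\epsilon(x)\geq C_0|x_n|^\beta$, whence $\norm{f_\epsilon}_{L^1(\uS)}\geq C_0\int_E |x_n|^\beta\dd x>0$, the last integral being a strictly positive constant independent of $\epsilon$. For $\epsilon$ in the complementary compact range $[c/2,1/2]$, a standard continuity and compactness argument (using the continuous dependence of $h_\epsilon$ on $\epsilon$ from the classical Minkowski problem, together with the positivity of $\norm{f_\epsilon}_{L^1}$ for each individual $\epsilon>0$) supplies the uniform bound on that range. The main obstacle is the geometric verification that $E$ is mapped into $S_0$ by the nonlinear map $x\mapsto x_\epsilon$; once this is in place, Lemma \ref{lem05} unlocks the pointwise control of $\delbar h_\epsilon$ and the rest of the estimate is routine.
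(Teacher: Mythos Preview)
Your small-$\epsilon$ argument is correct and takes a genuinely different route from the paper. The paper performs the change of variables $x=M_\epsilon y/|M_\epsilon y|$ (so $y=x_\epsilon$), which introduces an overall factor $\epsilon^\delta$; it then restricts to $y\in S_0$ directly and shows by a careful one-dimensional estimate that $\int_{S_0}|y_n|^\beta|M_\epsilon y|^{-\beta-\delta-1}\,\dd y\geq C\epsilon^{-\delta}$, cancelling that factor. You instead stay in the $x$-variable, observe that $E=\{|x'|>1/2\}$ is mapped into $S_0$ under $x\mapsto x_\epsilon$ for $\epsilon$ small, and then bound every factor of $f_\epsilon$ pointwise on $E$. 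This avoids both the substitution and the integral computation, and is more transparent in the regime $\epsilon\to 0$.

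The compactness argument for $\epsilon\in[c/2,1/2]$ is the one soft spot. Continuity of $\norm{f_\epsilon}_{L^1}$ in $\epsilon$ requires control of $\delbar h_\epsilon$ (the inverse Gauss map), not merely of $h_\epsilon$; when $\alpha$ or $\beta$ is positive the Minkowski data in \eqref{eq:1} vanishes on a codimension-one set, and continuous dependence of $\delbar h_\epsilon$ on $\epsilon$ is then not entirely routine. A direct argument removes the need for compactness: for $\epsilon\geq c/2$ the smallest eigenvalue of $N_\epsilon$ is $\epsilon\geq c/2$, so for every $x\in\uS$ one has $|N_\epsilon(\delbar h_\epsilon)(x_\epsilon)|\geq (c/2)\,|\delbar h_\epsilon(x_\epsilon)|\geq (c/2)C^{-1}$ from $K_{h_\epsilon}\supset B_{C^{-1}}$ (Lemma~\ref{lem03}), and similarly $|N_\epsilon x|\in[c/2,1]$. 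This yields $f_\epsilon(x)\geq C|x'|^\alpha|x_n|^\beta$ pointwise on all of $\uS$, hence the desired $L^1$ lower bound on that range. With this replacement your two-range strategy is fully rigorous and arguably simpler than the paper's unified computation.
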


\begin{proof}{}
  We start from the following inequality:
  \begin{equation*}
    \begin{split}
      f_\epsilon(x)
      &=
      h_\epsilon(x_\epsilon)^{1-p}
      |x'|^\alpha \,|x_n|^\beta \,|N_\epsilon x|^{\delta-\alpha-n+1} 
      |N_\epsilon(\delbar h_\epsilon)(x_\epsilon)|^{q-n} \\
      &\geq C
      |x'|^\alpha \,|x_n|^\beta \,|N_\epsilon x|^{\delta-\alpha-n+1} 
      |N_\epsilon(\delbar h_\epsilon)(x_\epsilon)|^{q-n},
    \end{split}
  \end{equation*}
  where Lemma \ref{lem03} is applied.
  Then
  \begin{equation*}
    \norm{f_\epsilon}_{L^1}
    \geq C \int_{\uS} 
    |x'|^\alpha \,|x_n|^\beta \,|N_\epsilon x|^{\delta-\alpha-n+1} 
    |N_\epsilon(\delbar h_\epsilon)(x_\epsilon)|^{q-n} \dd x.
  \end{equation*}
  
  We shall apply the integration by substitution 
  \begin{equation*}
    x
    = \frac{M_\epsilon y}{|M_\epsilon y|}
    = \frac{(\epsilon y',y_n)}{|M_\epsilon y|}
    \quad \text{ for } y\in\uS.
  \end{equation*}
From the notations in \eqref{eq:31}, we have 
  \begin{gather*}
    N_\epsilon x
    = \frac{\epsilon y}{|M_\epsilon y|}, \\ 
    x_\epsilon
    = \frac{M_\epsilon^{-1}x}{|M_\epsilon^{-1}x|} 
    =y.
  \end{gather*}
Hence, using the computation as in \cite[Lemma 2.2]{Lu.SCM.61-2018.511} one has
  \begin{equation}\label{eq:44}
    \begin{split}
      \norm{f_\epsilon}_{L^1}
      &\geq C \int_{\uS} 
      \frac{|\epsilon y'|^\alpha}{|M_\epsilon y|^\alpha} \cdot
      \frac{|y_n|^\beta}{|M_\epsilon y|^\beta}
      \biggl( \frac{\epsilon}{|M_\epsilon y|} \biggr)^{\delta-\alpha-n+1} 
      |N_\epsilon(\delbar h_\epsilon)(y)|^{q-n} \cdot
      \frac{\epsilon^{n-1}\dd y}{|M_\epsilon y|^n} \\
      &= C \epsilon^{\delta} \int_{\uS} 
      | y'|^\alpha
      \,|y_n|^\beta
      \,|M_\epsilon y|^{-\beta-\delta-1} 
      \,|N_\epsilon(\delbar h_\epsilon)(y)|^{q-n}
      \dd y \\
      &\geq C \epsilon^{\delta} \int_{S_0} 
      | y'|^\alpha
      \,|y_n|^\beta
      \,|M_\epsilon y|^{-\beta-\delta-1} 
      \,|N_\epsilon(\delbar h_\epsilon)(y)|^{q-n}
      \dd y.
    \end{split}
  \end{equation}
Again, by \eqref{eq:31} and
  the notation $\delbar h_\epsilon$ in \eqref{eq:43},
  we have
  \begin{equation*}
    \begin{split}
      |N_\epsilon(\delbar h_\epsilon)(y)|
      &= \sqrt{|\xi_\epsilon'(y)|^2+\epsilon^2|\xi_{\epsilon n}(y)|^2} \\
      &\geq |\xi_\epsilon'(y)| \\
      &>\frac{1}{2}C^{-1}, \quad \forall\, y\in S_0,
    \end{split}
  \end{equation*}
  where the last inequality is due to Lemma \ref{lem05}.
Observe that
  \begin{equation*}
    |N_\epsilon(\delbar h_\epsilon)(y)|
    \leq |\delbar h_\epsilon(y)|
    \leq C, \quad \forall\, y\in\uS,
  \end{equation*}
  where the last inequality is due to Lemma \ref{lem03}.
  Hence for any $\epsilon\in(0,1/2)$ we have
  \begin{equation*}
    \frac{1}{2}C^{-1}<|N_\epsilon(\delbar h_\epsilon)(y)|
    \leq C, \quad \forall\, y\in S_0. 
  \end{equation*}
  Inserting it into \eqref{eq:44}, one has that
  \begin{equation}\label{eq:45}
    \begin{split}
      \norm{f_\epsilon}_{L^1}
      &\geq C \epsilon^{\delta} \int_{S_0} 
      |y'|^\alpha
      \,|y_n|^\beta
      \,|M_\epsilon y|^{-\beta-\delta-1} 
      \dd y \\
      &\geq C \epsilon^{\delta} \int_{S_0} 
      |y_n|^\beta
      \,|M_\epsilon y|^{-\beta-\delta-1} 
      \dd y \\
      &= C \epsilon^{\delta} \int_{S_0} 
      |y_n|^\beta
      \sqrt{\epsilon^2|y'|^2+y_n^2}^{-\beta-\delta-1} 
      \dd y.
    \end{split}
  \end{equation}
  
  Recalling the definition of $S_0$ in \eqref{eq:41} and denoting
  \begin{equation*}
    \theta_0 := \frac{\pi}{4}+\frac{1}{2}\arccos C^{-2},
  \end{equation*}
  we have
  \begin{equation}\label{eq:47}
    \begin{split}
      \int_{S_0} |y_n|^\beta
      &\sqrt{\epsilon^2|y'|^2+y_n^2}^{-\beta-\delta-1} \dd y \\
      &= 2\omega_{n-2} \int_{\theta_0}^{\frac{\pi}{2}}
      \cos^\beta\!\theta
      \,(\epsilon^{2}\sin^2\theta+\cos^2\theta)^{-\frac{\beta+\delta+1}{2}} \sin^{n-2}\theta \dd\theta \\
      &\geq C \int_{\theta_0}^{\frac{\pi}{2}}
      \cos^\beta\!\theta
      \,(\epsilon\sin\theta+\cos\theta)^{-\beta-\delta-1} \dd\theta \\
      &= C \int_0^{\frac{\pi}{2}-\theta_0}
      \sin^\beta\!\phi
      \,(\epsilon\cos\phi+\sin\phi)^{-\beta-\delta-1} \dd\phi \\
      &\geq C \int_0^{\frac{\pi}{2}-\theta_0}
      \phi^\beta
      \,(\epsilon+\phi)^{-\beta-\delta-1} \dd\phi.
    \end{split}
  \end{equation} 
  When $0<\epsilon\leq \frac{\pi}{2}-\theta_0$, one has
  \begin{equation*}
    \begin{split}
      \int_0^{\frac{\pi}{2}-\theta_0}
      \phi^\beta
      \,(\epsilon+\phi)^{-\beta-\delta-1} \dd\phi
      &\geq
      \int_0^{\epsilon}
      \phi^\beta
      \,(\epsilon+\phi)^{-\beta-\delta-1} \dd\phi \\ 
      &\geq
      \int_0^{\epsilon}
      \phi^\beta
      \,(2\epsilon)^{-\beta-\delta-1} \dd\phi \\ 
      &= C \epsilon^{-\delta}.
    \end{split}
  \end{equation*}  
  When $\epsilon> \frac{\pi}{2}-\theta_0$, one has
  \begin{equation*}
    \begin{split}
      \int_0^{\frac{\pi}{2}-\theta_0}
      \phi^\beta
      \,(\epsilon+\phi)^{-\beta-\delta-1} \dd\phi
      &\geq
      \int_0^{\frac{\pi}{2}-\theta_0}
      \phi^\beta
      \,2^{-\beta-\delta-1} \dd\phi  \\
      &\geq C.
    \end{split}
  \end{equation*}

Therefore, for any $\epsilon\in(0,1/2)$,
  \begin{equation} \label{eq:46}
    \int_0^{\frac{\pi}{2}-\theta_0}
    \phi^\beta
    \,(\epsilon+\phi)^{-\beta-\delta-1} \dd\phi
    \geq
    C \epsilon^{-\delta}. 
  \end{equation}
Combining \eqref{eq:45}, \eqref{eq:47} and \eqref{eq:46}, we obtain that
  \begin{equation*}
    \norm{f_\epsilon}_{L^1}\geq C.
  \end{equation*}
The proof of Lemma \ref{lem04} is finished. 
\end{proof}

Now, we are ready to complete the proof of Theorem \ref{thm1} by comparing the $q$-th dual volumes of $K_{H_\epsilon}$ and $K_{\widetilde{H}_\epsilon}$.
\begin{proof}[\textbf{Proof of Theorem \ref{thm1}}]
  By Theorem \ref{thm2} and Lemma \ref{lem04}, we have
  \begin{equation}
\widetilde{V}_q(K_{\widetilde{H}_\epsilon}) \geq C>0,
\end{equation}
where $C$ depends only on $n$, $p$, $q$,
$\alpha$, $\beta$ and $\delta$, and is independent of $\epsilon$.

For any given $q\in(0,1)$ and $p\in(-\infty,q-1)$, we can choose $\alpha=0$,
$\beta=0$, and $\delta\in(1-q,-p)$. Then, by virtue of \eqref{eq:38}
\begin{equation*} 
  \widetilde{V}_q(K_{H_\epsilon})
  \leq C
  \epsilon^{\frac{q(q+\delta-1)}{q-p}}
  \to 0, \qquad\text{ as }\ \epsilon\to0^+,
\end{equation*}
which implies that $K_{H_\epsilon}$ and $K_{\widetilde{H}_\epsilon}$ are
different convex bodies, namely $H_\epsilon$ and $\widetilde{H}_\epsilon$ are
different solutions to equation \eqref{eq:29}.

For $q=1$ and $p\in(-\infty,0)$, we can choose $\alpha=0$,
$\beta=0$, and $\delta\in(0,-p)$. Then, by virtue of \eqref{eq:38}
\begin{equation*} 
  \widetilde{V}_q(K_{H_\epsilon})
  \leq C
  \epsilon^{\frac{\delta}{1-p}}|\log\epsilon|
  \to 0,\qquad \text{ as } \epsilon\to0^+,
\end{equation*}
which implies that $K_{H_\epsilon}$ and $K_{\widetilde{H}_\epsilon}$ are
different convex bodies, namely $H_\epsilon$ and $\widetilde{H}_\epsilon$ are
different solutions to equation \eqref{eq:29}.

For any given $q\in(1,+\infty)$ and $p\in(-\infty,0)$, we can choose $\alpha$
and $\beta$ to be non-negative even integers, and choose $\delta$ as
\begin{equation*}
-p\cdot\frac{q-1}{q}<\delta<-p.
\end{equation*}
Then, by virtue of \eqref{eq:38}
\begin{equation*} 
  \widetilde{V}_q(K_{H_\epsilon})
  \leq C
  \epsilon^{\frac{q\delta+p(q-1)}{q-p}}
  \to 0,\qquad \text{ as } \epsilon\to0^+,
\end{equation*}
which implies that $K_{H_\epsilon}$ and $K_{\widetilde{H}_\epsilon}$ are
different convex bodies, namely $H_\epsilon$ and $\widetilde{H}_\epsilon$ are
different solutions to equation \eqref{eq:29}.
  If, in addition, $1+\frac{n}{p}<\frac{1}{p}+\frac{1}{q}<\frac{n}{q}-1$ holds, $\alpha$ and $\beta$ can be
chosen as zero. Then $f_\epsilon$ in \eqref{eq:30} is positive on $\uS$.

Therefore, the proof of Theorem \ref{thm1} is completed.
\end{proof}


\end{document}